\newtheorem{theorem}{Theorem}[section]
\newtheorem{corollary}[theorem]{Corollary}
\newtheorem{lemma}[theorem]{Lemma}
\newtheorem{lemma and definition}[theorem]{Lemma and Definition}
\newtheorem{proposition}[theorem]{Proposition}
\newtheorem{definition}[theorem]{Definition}
\newtheorem{exam}[theorem]{Example}
\newtheorem{remark}[theorem]{Remark}
\newtheorem{the construction}[theorem]{THE CONSTRUCTION}
\newcommand{\field}[1]{\mathbb{#1}}
\newcommand{\Z }{\field{Z}}
\newcommand{\N }{\field{N}}
\DeclareMathOperator{\G}{G}
\DeclareMathOperator{\g}{g}
\DeclareMathOperator{\F}{F}
\DeclareMathOperator{\PF}{PF}
\DeclareMathOperator{\pf}{PF}
\DeclareMathOperator{\co}{C}
\DeclareMathOperator{\ty}{t}
\DeclareMathOperator{\ft}{Filtered}
\DeclareMathOperator{\len}{Length}
\begin{document}
	
	\title{Young Diagram Decompositions for Almost Symmetric Numerical Semigroups}
	\author{Mehmet YEŞİL}
	\address{Department of Mathematics, Faculty of Science and Letters, Batman University, Batman, Turkey}
	\email[corresponding author]{mehmet-yesil@outlook.com}
	\keywords{Numerical sets, numerical semigroups, Young diagrams, symmetric numerical sets, almost symmetric numerical semigroups}
	\subjclass[2020]{20M14, 20M20}
	\maketitle
	
	\begin{abstract}
		This paper introduces new structural decompositions for almost symmetric numerical semigroups through the combinatorial lens of Young diagrams. To do that, we use the foundational correspondence between numerical sets and Young diagrams, which enables a visual and algorithmic approach to studying properties of numerical semigroups. Central to the paper, a decomposition theorem for almost symmetric numerical semigroups is proved, which reveals that such semigroups can be uniquely expressed as a combination of a numerical semigroup, its dual and an ordinary numerical semigroup.
	\end{abstract}

\section{Introduction}

Throughout, we denote by $\Z$ and $\N$ the sets of integers and positive integers, respectively, and define $\N_{0}=\N\cup\{0\}$. A \textit{numerical set} $R$ is a subset of $\N_{0}$ that contains $0$ and has a finite complement in $\N_{0}$. In particular, $\N_{0}$ itself is a numerical set with an empty complement. If a numerical set $R$ is closed under addition, i.e. if $x,y \in R$ implies $x+y\in R$, then it is referred to as a \textit{numerical semigroup}.

Numerical semigroups naturally emerge in various areas of mathematics, such as algebraic geometry, commutative algebra, and combinatorics. They provide a rich framework for studying problems related to factorization, symmetry, and combinatorial structures. A central theme in the study of numerical semigroups is the classification and decomposition of semigroups with specific properties which are often closely tied to the structure of their gaps, i.e. the elements of $\N_0$ not contained in the semigroup.

A powerful tool for visualizing and analysing numerical sets and semigroups is the use of Young diagrams. A Young diagram is a combinatorial object consisting of left-aligned rows of boxes, with the number of boxes in each row non-decreasing from bottom to top. A bijective correspondence between Young diagrams and proper numerical sets was introduced by Keith and Nath in \cite{KN}. This correspondence provides a geometric perspective on semigroup properties, facilitating intuitive and algorithmic approaches to their study. Recent research has further explored this connection, uncovering deep links between the algebraic structure of semigroups and the combinatorics of their associated diagrams.

Constantin, Houston-Edwards and Kaplan utilized this correspondence to derive combinatorial results about core partitions in \cite{HBN}. Tutaş and her collaborators characterized numerous algebraic properties of Arf numerical semigroups using this framework in a series of works \cite{N4,N3,N2,N5,N1}. In particular, Gümüşbaş and Tutaş introduced primitive semigroup decompositions in \cite{N4}, while Karakaş and Tutaş developed hook set decompositions for Arf semigroups in \cite{N2}.

Furthermore, Süer and Yeşil used the correspondence to define Young diagram decompositions for classically irreducible numerical semigroups into a semigroup and its dual in \cite{SY}, and to introduce special subdiagrams of Young diagrams in \cite{SY2}.

In this paper, we focus on almost symmetric numerical semigroups, a class that generalizes the concept of symmetric semigroups. Almost symmetric semigroups were introduced by Barucci and Fröberg in \cite{BF} in the context of generalizing one-dimensional Gorenstein rings. These semigroups occupy a central position in the theory of numerical semigroups due to their algebraic richness and structural tractability, and they have been the subject of significant research in recent years.

The main contribution of this paper is a decomposition theorem for almost symmetric numerical semigroups, showing that such semigroups can be uniquely expressed as combinations of a numerical semigroup, its dual, and an ordinary numerical semigroup. This result extends the work of Süer and Yeşil in \cite{SY}, generalizing their decomposition approach to a broader class: almost symmetric numerical semigroups whose pseudo-Frobenius numbers are consecutive, except for the Frobenius number itself.

This paper is organized as follows. In Section \ref{sec2}, we provide the preliminary concepts of numerical sets and semigroups, including gaps, the Frobenius number, pseudo-Frobenius numbers, duality and other related notions. Section \ref{sec3} establishes the correspondence between numerical sets and Young diagrams, introducing key algorithms for translating between these two representations. In Section \ref{sec4}, we define bonded, end-to-end, and conjoint sums of Young diagrams and numerical sets, which form the basis for our decomposition techniques. We also provide the semigroup structures of the set of proper numerical sets under these operations, and duality relations on them. Section \ref{sec5} presents main results of the paper, including the symmetric numerical set construction lemma (see Lemma \ref{sym}), and the decomposition theorem (see Theorem \ref{mainthm}) for almost symmetric numerical semigroups, accompanied by illustrative examples.

\section{Numerical Sets and Semigroups}\label{sec2}

If a numerical set $R$ is not equal to $\N_{0}$, it is referred to as \textit{proper}. For a proper numerical set $R$, the complement $\G(R)=\N_{0}\setminus R$ is called the set of \textit{gaps} of $R$. The \textit{genus} of $R$, denoted by $\g(R)$, is the number of gaps of $R$, i.e. the cardinality of $\G(R)$. The largest element of $\G(R)$ is known as the \textit{Frobenius number} of $R$, denoted by $\F(R)$. The quantity $\co(R)=\F(R)+1$ is referred to as the \textit{conductor} of $R$, which satisfies that $z\in\N_{0}$ and $z>\co(R)$ implies $z\in R$. Also, we define the Frobenius of $\N_{0}$ to be $-1$, and its conductor to be $0$.

The elements of a proper numerical set $R$ that are less than or equal to the conductor $\co(R)$ are referred to as the \textit{small elements} of $R$. If $R$ has $n$ non-zero small elements, we list them as $0=r_{0}<r_{1}<\dots<r_{n-1}<r_n=\co(R)$, and write $$R=\{0,r_{1},\dots,r_{n-1},r_{n}=\co(R),\rightarrow\},$$ where meaning of the arrow at the end is that all integers greater than $\co(R)$ are also elements of $R$. The small elements of $R$ that are less than the conductor are called the \textit{left elements} of $R$.

A numerical set $R$ is called \textit{ordinary} if it has exactly one non-zero small element. More generally, if $R$ has $n$ non-zero small elements and genus $\g(R)=g$, then it follows that $\co(R)=n+g$ and $\F(R)=n+g-1$.

\begin{exam}\label{ex1}
	The set $R=\{ 0,4,6,7,9,10,12\rightarrow \}$ is a numerical set with $\F(R)=11$ and $\co(R)=12$, where its gap set is $\G(R)=\{ 1,2,3,5,8,11\}$ and genus $\g(R)=6$.
\end{exam}

A numerical set $R$ is said to be \textit{symmetric} if for all $a \in [0,\F(R)]\cap\N_0$, exactly one of the elements $a$ or $\F(R)-a$ belongs to $R$.

Let $R=\{0,r_{1},\dots,r_{n-1},r_{n}=\co(R),\rightarrow\}$ be a numerical set with $n$ non-zero small elements, and let $\G(R)=\{ a_{1},\dots,a_{g}=\F(R) \}$ be its set of gaps. The \textit{dual} of $R$, denoted by $R^{*}$, is defined as  $$R^{*}=\{0,\F(R)-a_{g-1},\F(R)-a_{g-2},\dots,\F(R)-a_{1},r_{n},\rightarrow \}$$ where $\co(R^{*})=\co(R)$, and hence $\F(R^{*})=\F(R)$. By construction, $R^{*}$ is a numerical set with $g$ non-zero small elements and $$\G(R^{*})=\{ \F(R)-r_{n-1},\F(R)-r_{n-2},\dots,\F(R)-r_1,\F(R) \}.$$ Notice also that $R=(R^{*})^{*}$, and that $R=R^{*}$ if and only if $R$ is symmetric.

Let $R$ be a numerical semigroup and let $I\subset R$. If $$R=\{ \sum_{i=1}^{e}\lambda_{i}n_{i} \mid n_{i}\in I,\lambda_{i}\in\N_{0},e\in \N \},$$ then it is said that $R$ is \textit{generated} by $I$, and written as $R=\langle I \rangle$. Also, it is commonly known that every numerical semigroup is finitely generated; that is there always exists a finite set $I$ such that $R=\langle I \rangle$. If there is no proper subset of $I$ that generates $R$, then $I$ is called a \textit{minimal generating set} of $R$. The number of element in a minimal generating set of $R$ is referred to as \textit{embedding dimension} of $R$.

An element $z\in\Z$ is said to be a \textit{pseudo-Frobenius number} of $R$ if $z\notin R$ and $z+r\in R$ for all non-zero element $r\in R$. The set of pseudo-Frobenius numbers of $R$ is denoted by $\pf(R)$. The cardinality of $\PF(R)$ is referred to as the \textit{type} of $R$, denoted by $\ty(R)$.


A numerical semigroup $R$ is symmetric if it is symmetric as a numerical set, or equivalently if it has the property that  $2\g(R)=\F(R)+1$. And $R$ is called \textit{pseudo-symmetric} if and only if $\F(R)$ is even and for every $z\in \Z\setminus R$, either $z\in \{ \frac{\F(R)}{2}, \F(R) \}$ or $\F(R)-z\in R$. Equivalently $R$ is pseudo-symmetric if and only if $2\g(R)=\F(R)+2$.

More generally, it is well-known that $2\g(R)\geq \F(R)+\ty(R)$ is valid for any numerical semigroup $R$. When the equality holds, i.e. $2\g(R)= \F(R)+\ty(R)$, $R$ is said to be \textit{almost symmetric}. Moreover, $R$ is almost symmetric if and only if  every $z\in \Z\setminus R$ satisfies that either $z\in\pf(R)$ or $\F(R)-z\in R$.

For more intuition on numerical semigroups, we refer to \cite{RG}.

\section{Numerical Sets and Young Diagrams}\label{sec3}

Let $Y$ be a Young diagram with $n$ columns and $g$ rows. The number of boxes in a given column (respectively, row) is referred to as the \textit{length} of that column (respectively, row). The \textit{hook} of a box in $Y$ consists of the box itself, all boxes to its right in the same row, and all boxes below it in the same column. The total number of boxes in the hook is called the \textit{hook length} of that box. For more intuition on Young diagrams, we refer to \cite{F}.

Let $R$ be a numerical set. The associated Young diagram $Y_{R}$ can be constructed by tracing a continuous polygonal path in $\Z^{2}$ starting from the origin. Beginning with $r=0$, we proceed as follows:
\begin{enumerate}
	\item if $r \in R$, draw a unit-length line segment to the right,
	\item if $r \notin R$, draw a unit-length line upward.
\end{enumerate}
This process is repeated incrementally for $r+1$ until reaching $r=\F(R)$. The region bounded above by the horizontal line at height $\g(R)$, below by the constructed polygonal path and left by the $y$-axis defines the Young diagram $Y_{R}$. It is evident that every Young diagram has such a polygonal path and it uniquely corresponds to a proper numerical set. Thus, the correspondence $S\rightarrow Y_{R}$ establishes a bijection between the set of Young diagrams and the set of proper numerical sets.

\begin{exam} \label{ex2}
	The Young diagram depicted below is associated with the numerical set given in Example \ref{ex1}.
	\begin{center}
		\tiny{\begin{tikzpicture}[scale=0.5]
				\draw[very thin, gray] (-1,0) grid (18,6);
				\draw[thick, ->] (-2,0) -- (19,0) node[anchor=north west] {};
				\draw[thick, ->] (0,-1) -- (0,7) node[anchor=south east] {};
				\draw[thick, ->] (0,0)--(1,0)--(1,3)--(2,3)--(2,4)--(4,4)--(4,5)--(6,5)--(6,6)--(8,6);
				
				\node at (0.5,0)[below]{0};
				\node at (1,0.5)[left]{\color{red}{1}};
				\node at (1,1.5)[left]{\color{red}2};
				\node at (1,2.5)[left]{\color{red}3};
				\node at (1.5,2)[above]{4};
				\node at (2,3.5)[left]{\color{red}5};
				\node at (2.5,3)[above]{6};
				\node at (3.5,3)[above]{7};
				\node at (4,4.5)[left]{\color{red}8};
				\node at (4.5,4)[above]{9};
				\node at (5.5,4)[above]{10};
				\node at (6,5.5)[left]{\color{red}11};
				\node at (6.5,6)[above]{12};
				\node at (7.5,6)[above]{$\rightarrow$};
				
				\node at (9.5,2.5){$\color{red}\boldsymbol{\longleftrightarrow}$};
				\node at (9.5,3.5){$\color{red}\boldsymbol{\longleftrightarrow}$};
				\draw[thick, -] (11,0) -- (11,6);
				\draw[thick, -] (11,0)--(12,0)--(12,3)--(13,3)--(13,4)--(15,4)--(15,5)--(17,5)--(17,6);
				\draw[thick, -] (11,6)--(17,6);
				\draw[thick, -] (11,5)--(17,5);
				\draw[thick, -] (11,4)--(15,4);
				\draw[thick, -] (11,3)--(13,3);
				\draw[thick, -] (11,2)--(12,2);
				\draw[thick, -] (11,1)--(12,1);
				\draw[thick, -] (12,3)--(12,6);
				\draw[thick, -] (13,3)--(13,6);
				\draw[thick, -] (14,4)--(14,6);
				\draw[thick, -] (15,4)--(15,6);
				\draw[thick, -] (16,5)--(16,6);
		\end{tikzpicture}}
	\end{center}
\end{exam}

Let $R=\{0,r_{1},\dots,r_{n-1},r_{n},\rightarrow\}$ be a numerical set with gap set $\G(R)=\{ a_{1},\dots,a_{g} \}$ and associated Young diagram $Y_{R}$. By construction, $Y_{R}$ has $g$ rows and $n$ columns. From left to right, each $i$th small element of $R$ corresponds to the $(i+1)$th column of $Y_R$, so that $r_0$ corresponds to the first column, $r_1$ to the second column, and so on. Similarly, from bottom to top, for each gap $a_j$ of $R$ corresponds to the $j$th row of $Y_R$.

The hook length of the box in the $i$th row (from bottom to top) and the first column of $Y_R$ is the $i$th gap $a_i$ of $R$. Moreover, for each $i\in [0,n-1]\cap\N_0$, the $i$th column of $Y_{R}$ can be identified with the set of hook lengths of the boxes in it. We denote this set by $\G_{i}(R)$.

\begin{proposition} \label{prp} \cite[Section 2]{N1}
	Let $R=\{0,r_{1},\dots,r_{n-1},r_{n},\rightarrow\}$ be a numerical set with the associated Young diagram $Y_{R}$. Then:
	\begin{enumerate}
		\item For each $i\in [0,n-1]\cap\N_0$, we have $$\G_{i}(R)=\{ a-r_i \mid a\in \G(R) \text{ and } a>r_i \}=\G(R-r_i)$$ where $R-r_i=\{ 0,r_{i+1}-r_i,\dots,r_n-r_i,\rightarrow \}$.
		\item $R$ is a numerical semigroup if and only if $\G_{i}(R)\subseteq \G_{0}(R)$ for all $i\in [0,n-1]\cap\N_0$, or equivalently $\bigcup_{i=0}^{n-1}\G_{i}(R)=\G(R)$.
	\end{enumerate}
\end{proposition}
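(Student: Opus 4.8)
The plan is to establish part (1) first --- the combinatorial core --- and then derive part (2) by reducing additive closure to small elements. For part (1), fix $i\in[0,n-1]\cap\N_{0}$, so that the $i$th column of $Y_{R}$ is the one recording the small element $r_{i}$. From the construction of $Y_{R}$, the box in column $i$ and row $j$ (counted from the bottom, so it records the gap $a_{j}$) lies in $Y_{R}$ exactly when the right-step for $r_{i}$ precedes the up-step for $a_{j}$ along the defining path, i.e. exactly when $r_{i}<a_{j}$; hence the boxes of column $i$ are indexed by $\{\,j\mid a_{j}>r_{i}\,\}$. It remains to compute the hook length of the box in column $i$, row $j$ (with $r_{i}<a_{j}$). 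Its leg is formed by the boxes in column $i$ and rows $j'<j$ lying in $Y_{R}$, namely those with $r_{i}<a_{j'}<a_{j}$, so the leg length is $\#\{\,a\in\G(R)\mid r_{i}<a<a_{j}\,\}$; its arm is formed by the boxes in row $j$ and columns $i'>i$ lying in $Y_{R}$, namely those with $r_{i}<r_{i'}<a_{j}$, so the arm length is $\#\{\,i'\mid i<i'\le n-1,\ r_{i}<r_{i'}<a_{j}\,\}$. Each of the $a_{j}-r_{i}-1$ integers $t$ with $r_{i}<t<a_{j}$ is either a gap of $R$ (hence the gap of a leg box) or an element of $R$, in which case $t<a_{j}\le\F(R)<\co(R)$ forces $t$ to be a nonzero small element below the conductor, so $t=r_{i'}$ with $i<i'\le n-1$ (hence $t$ indexes an arm box); conversely every leg and arm box arises from exactly one such $t$. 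Therefore (leg length) $+$ (arm length) $=a_{j}-r_{i}-1$, the hook length is $a_{j}-r_{i}$, and $\G_{i}(R)=\{\,a-r_{i}\mid a\in\G(R),\ a>r_{i}\,\}$.

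To finish part (1), observe that the elements of $R$ in $[r_{i},\co(R)]$ are exactly $r_{i},r_{i+1},\dots,r_{n}$, so $R-r_{i}$ has $0,r_{i+1}-r_{i},\dots,r_{n}-r_{i}$ as its elements up to its conductor $\co(R)-r_{i}$, whence its gaps --- all of which lie in $[1,\co(R)-r_{i}-1]$ --- are exactly $\{\,a-r_{i}\mid a\in\G(R),\ r_{i}<a<\co(R)\,\}$. Since every gap of $R$ is $\le\F(R)<\co(R)$, this coincides with $\{\,a-r_{i}\mid a\in\G(R),\ a>r_{i}\,\}$, giving $\G_{i}(R)=\G(R-r_{i})$.

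For part (2), taking $i=0$ gives $\G_{0}(R)=\G(R-r_{0})=\G(R)$, so $\G_{0}(R)\subseteq\bigcup_{i=0}^{n-1}\G_{i}(R)$ always; hence $\bigcup_{i=0}^{n-1}\G_{i}(R)=\G(R)$ is equivalent to $\G_{i}(R)\subseteq\G(R)=\G_{0}(R)$ for all $i$, and it suffices to prove $R$ is a numerical semigroup iff this holds. Because every integer $\ge\co(R)$ belongs to $R$, any element of $R$ that is $\ge\co(R)$ absorbs all of $R$ under addition, so $R$ is closed under addition iff $r_{i}+r_{j}\in R$ for all $i\in[0,n-1]\cap\N_{0}$ and all small $r_{j}$. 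I claim, for fixed $i$, that $\G_{i}(R)\subseteq\G(R)$ iff $r_{i}+r_{j}\in R$ for every small $r_{j}$. If $\G_{i}(R)\subseteq\G(R)$: for $r_{j}=0$ clearly $r_{i}+r_{j}=r_{i}\in R$, and for $r_{j}>0$, were $r_{i}+r_{j}\notin R$ it would be a gap exceeding $r_{i}$, forcing $r_{j}=(r_{i}+r_{j})-r_{i}\in\G_{i}(R)\subseteq\G(R)$, contradicting $r_{j}\in R$. Conversely, if $r_{i}+r_{j}\in R$ for all small $r_{j}$ and $m\in\G_{i}(R)$, write $m=a-r_{i}$ with $a\in\G(R)$, $a>r_{i}$; then $0<m<a\le\F(R)<\co(R)$, so $m\in R$ would make $m$ a small element with $a=r_{i}+m\in R$, impossible; thus $m\in\G(R)$. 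Quantifying over $i$ completes the proof.

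\emph{Main obstacle.} The one load-bearing step is the hook-length identity in part (1) --- that the box in column $i$, row $j$ has hook length $a_{j}-r_{i}$ --- which rests on partitioning the open interval $(r_{i},a_{j})$ into gaps (counted by the leg) and sub-conductor elements (counted by the arm). Everything afterwards --- the translation description of $R-r_{i}$, and all of part (2) --- is bookkeeping. An alternative to the interval count is to check directly from the polygonal path that deleting the first $i$ columns of $Y_{R}$ (and the rows they exhaust) yields $Y_{R-r_{i}}$ and then invoke the already-stated first-column case; I would note this but proceed with the self-contained argument above.
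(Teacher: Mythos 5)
The paper offers no proof of this proposition: it is imported with the citation \cite[Section 2]{N1}, so there is no in-paper argument to compare yours against. Judged on its own, your proof is correct and complete. The load-bearing step you single out is handled properly: a box of $Y_R$ sits in column $i$, row $j$ exactly when $a_j>r_i$, its leg counts the gaps in the open interval $(r_i,a_j)$, its arm counts the elements of $R$ in that interval (all of which are left elements because $a_j\leq \F(R)<\co(R)=r_n$), and these two classes partition the $a_j-r_i-1$ integers strictly between $r_i$ and $a_j$, giving hook length $a_j-r_i$. The identification $\G_i(R)=\G(R-r_i)$ and the reduction of additive closure to sums of left elements (anything $\geq\co(R)$ absorbs) are likewise sound, and your two-sided argument that $\G_i(R)\subseteq\G(R)$ is equivalent to $r_i+r_j\in R$ for all small $r_j$ is exactly the right bridge for part (2). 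One presentational remark: the paper's text says $r_i$ corresponds to the $(i+1)$th column while the proposition indexes columns by $i\in[0,n-1]\cap\N_0$; you silently adopt the zero-based convention of the proposition, which is the consistent choice, but it would be worth one sentence fixing that convention at the outset.
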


By Proposition \ref{prp}, it follows that $\G_{0}(R)=\G(R)$, and for each $i\in [0,n-1]\cap\N_0$, the hook length of the box in the $i$th column (from left to right) and the top row of $Y_{R}$ is $\F(R)-r_{i}$.

\begin{exam}
	The associated Young diagram $Y_{R}$ of the numerical set $R$ given in Example \ref{ex1} has $4$ columns and $4$ rows, and the hook lengths of the boxes in $Y_{R}$ are shown in the picture below.
	\vspace{1em}
	\begin{center}
		\tiny{\ytableaushort
			{{11}75421,8421,51,3,2,1}
			*{6,4,2,1,1,1}}
	\end{center}
	\vspace{1em}
\end{exam}

For a Young diagram $Y$, we define the \textit{dual} of $Y$ as the Young diagram which is obtained from interchanging the rows and columns of $Y$, and denote it as $Y^{*}$. That is, the $i$th row of $Y$ becomes the $i$th column of $Y^{*}$. Notice that $Y=(Y^{*})^{*}$. If $Y_{R}$ is the Young diagram associated to $R$, then $Y_{R}^{*}=Y_{R^{*}}$ is the Young diagram associated to $R^{*}$.

A Young diagram $Y$ is said to be \textit{symmetric} if the associated numerical set is symmetric. Equivalently, $Y$ is symmetric if and only if $Y=Y^{*}$.

For a positive integer $N$, a \textit{partition} $\nu$ of $N$ is a finite non-increasing sequence of positive integers $\nu_1\geq\nu_2\geq\dots\geq\nu_n$ such that $\nu_1+\nu_2+\dots+\nu_n=N$, denoted by $\nu=[\nu_1,\nu_2,\dots,\nu_n]$. Each $\nu_i$ is called a \textit{part} of the partition and the number $n$ of parts is referred to as the \textit{length} of the partition.

In the context of Young diagrams, listing all the lengths of all rows from top to down yields a partition. Conversely, every partition $\nu=[\nu_1,\nu_2,\dots,\nu_n]$ corresponds to a Young diagram with $\nu_1$ columns and $n$ rows, where the length of $i$th row (from top to down) is $\nu_i$. This correspondence defines a bijective mapping between the set of partitions and the set of Young diagrams. Hence, we can identify a Young diagram $Y$ as $Y=\nu$ if its corresponding partition is $\nu$. For instance, the associated Young diagram $Y_R$ of the numerical set $R$ given in \ref{ex1} is corresponding to the partition $[6,4,2,1,1,1]$ of $15$, and so we write $Y_R=[6,4,2,1,1,1]$.

Next, we present algorithms for visualizing numerical sets using Young diagrams. To facilitate their description, we begin by establishing some notation. Let $L=[l_1,\dots,l_n]$ be a list of length $n$. We denote by $L[i]$ the $i$th element of $L$.  For an integer $a$, we define $a\pm L$ as the list $[a\pm l_1,\dots,a\pm l_n]$. Additionally, $\ft(L,m\rightarrow m>0)$ denotes the list containing only the positive elements of $L$, preserving their original order.

\begin{algorithm} 
	\caption{Algorithm for finding the associated partition of a numerical set.}
	\label{alg1}
	\begin{algorithmic}
		\REQUIRE A list of integers starting from zero in increasing order $[r_0=0,r_1,\dots,r_n]$.
		\ENSURE The associated partition of the numerical set $R=\{ 0,r_1,\dots,r_n,\rightarrow \}$.
		\STATE \textbf{Initialize} $P=[\dots]$ and $i=n$
		\WHILE{$i>0$}
		\STATE $L=[\dots]$
		\STATE $q_i=r_i-r_{i-1}-1$
		\FOR{$k\in \{1,\dots,q_i\}$}
		\STATE $L[k]=i$
		\ENDFOR
		\STATE Append $L$ to $P$
		\STATE $i\leftarrow i-1$
		\ENDWHILE
		\RETURN $P$
	\end{algorithmic}
\end{algorithm}

Algorithm \ref{alg1} takes the list of small elements of a numerical set $R$ and returns a partition, where the parts represent the row lengths of the Young diagrams $Y_R$ associated with $R$. For example, if $R=\{ 0,4,6,7,9,10,12,\rightarrow \}$, the output of Algorithm \ref{alg1} for $R$ is the partition $[6,4,2,1,1,1]$. To draw the associated Young diagram $Y_R$ in LaTeX, one can use \texttt{ytableau} package and place the output inside the command \verb|\ydiagram{...}|.

\begin{algorithm} 
	\caption{Algorithm for finding the associated numerical set of a partition.}
	\label{alg2}
	\begin{algorithmic}
		\REQUIRE A list of non-increasing positive integers $[p_1,\dots,p_n]$.
		\ENSURE The associated numerical set of the partition $P=[p_1,\dots,p_n]$.
		\STATE \textbf{Initialize} $R=[0]$, $i=P[n]=p_n$ and $j=1$
		\IF{$i>1$}
		\STATE Append $[1,\dots,i-1]$ to $R$
		\ENDIF
		\WHILE{$j<n$}
		\STATE $a=P[n-j]-P[n-j+1]$
		\IF{$a=0$}
		\STATE $i\leftarrow i+1$
		\ELSE
		\STATE Append $[i+1,\dots,i+a]$ to $R$
		\STATE $i\leftarrow i+a+1$
		\ENDIF
		\STATE $j\leftarrow j+1$
		\ENDWHILE
		\STATE Add $i+1$ to $R$
		\RETURN $R$
	\end{algorithmic}
\end{algorithm}

Algorithm \ref{alg2} performs the inverse of Algorithm \ref{alg1}. Given a partition $P$, it returns the list of small elements of the associated numerical set. For instance, if $P=[6,4,2,1,1,1]$, then the output of Algorithm \ref{alg2} for $P$ is $[0,4,6,7,9,10,12]$.

\begin{algorithm} 
	\caption{Algorithm for finding the hook lengths of the associated Young diagram of a numerical set.}
	\label{alg3}
	\begin{algorithmic}
		\REQUIRE A list of integers starting from zero in increasing order $[r_0=0,r_1,\dots,r_n]$.
		\ENSURE The list of hook lengths of each rows of the associated Young diagram of the numerical set $R=\{ 0,r_1,\dots,r_n,\rightarrow \}$.
		\STATE \textbf{Initialize} $R=[0,r_1,\dots,r_n]$, $M=[\dots]$ and $f=r_n-1$
		\STATE Apply Algorithm \ref{alg1} to $R$ and find $P$
		\STATE Find $g=\len(P)$
		\STATE $M[1]=\ft(f - R, m \rightarrow m > 0)$
		\FOR{$i\in\{ 2,\dots,g\}$}
		\STATE $a=P[i-1]-P[i]+1$
		\STATE $M[i]=\ft(-a+M[i-1], m \rightarrow m > 0)$
		\ENDFOR
		\RETURN $M$
	\end{algorithmic}
\end{algorithm}

Algorithm \ref{alg3} takes the small elements of a numerical set $R$ and returns a list of lists where each inner list represents the hook lengths of boxes in a row of the Young diagrams $Y_R$. For example, if $R=\{ 0,4,6,7,9,10,12,\rightarrow \}$, the output of Algorithm \ref{alg3} for $R$ is $[[11,7,5,4,2,1],[8,4,2,1],[5,1],[1],[1],[1]]$. In this case, to draw the associated Young diagram $Y_R$ filled with hook lengths in LaTeX, one may use the \texttt{ytableau} package with \verb|\ytableaushort{{...}*{...}}| command.

For the algorithms above and the necessary calculations, GAP functions were produced and do not require any external GAP packages to run. However, when working specifically with numerical semigroups, the use of the \texttt{NumericalSgps} package \cite{DS} or \texttt{IntPic} package \cite{Del} will be needed.

\section{Operations on Young Diagrams}\label{sec4}
In this section, we utilize two operations introduced in Section 3 of \cite{SY} and introduce a new operation on Young diagrams. We also establish the corresponding interpretations of these operations in terms of numerical sets. Unless otherwise stated, all numerical sets considered in this section are assumed to be proper.

By analysing the construction of a Young diagram associated with a numerical set $R$, we observe that, for certain consecutive integers less than the Frobenius of $R$, there are only three different situations that can occur:
\vspace{1em}
\begin{center}
	\tiny{\begin{tabular}{ccccc}
			\ydiagram{2}& &\ydiagram{1+1,1}& &\ydiagram{1,1}\\
			$\underbrace{\hspace{4.8em}}_{\text{first case}}$&&$\underbrace{\hspace{4.8em}}_{\text{second case}}$&&$\underbrace{\hspace{4.8em}}_{\text{third case}}$
	\end{tabular}}
\end{center}
\vspace{1em}

The first case arises when there are no gaps between two consecutive small elements of $R$; the second case occurs when there is exactly one gap between two such elements; and the third case occurs when there are two or more gaps between them. These distinct local behaviours naturally motivate the definition of the following operations on Young diagrams.

\begin{definition}
	Let $Y$ be a Young diagram with $n$ columns and $g$ rows, and let $X$ be a Young diagram with $m$ columns and $k$ rows.
	\begin{enumerate}
		\item The \textit{bonded sum} of $Y$ and $X$, $Y\boxplus_{B}X$, is defined by glueing $X$ above $Y$ in such a way that the bottom row of $X$ is placed to right of the top row of $Y$. The resulting diagram is a Young diagram with $n+m$ columns and $g+k-1$ rows.
		\item The \textit{end-to-end sum} of $Y$ and $X$, $Y\boxplus_{E}X$, is defined by glueing $X$ above $Y$ such that the top-right corner of $Y$ aligns with the bottom-left corner of $X$. The resulting diagram is a Young diagram with $n+m$ columns and $g+k$ rows.
		\item The \textit{conjoint sum} of $Y$ and $X$, $Y\boxplus_{C}X$, is defined by glueing $X$ above $Y$ such that the most-left column of $X$ is placed directly on top of the right-most column of $Y$. The resulting diagram is a Young diagram with $n+m-1$ columns and $g+k$ rows.
	\end{enumerate}
\end{definition}

\begin{exam} Let $Y=[4,2,1]$ and $X=[4,3,1]$ be Young diagrams. We illustrate each of the three sum operations of $Y$ and $X$ as follows.
	\begin{enumerate}
		\item First, we draw $Y\boxplus_{B}X$
		\vspace{1em}
		\begin{center}
			\tiny{\begin{tabular}{ccccc}
					\ydiagram[*(yellow) ]{4,2,1}&$\boxplus_{B}$&\ydiagram[*(red) ]{4,3,1}&$=$&\ydiagram[*(yellow) \rightarrow]
					{0,0,3+1,0,0}
					*[*(red) \leftarrow]
					{0,0,4+1,0,0}
					*[*(red) ]{4+4,4+3,4+1}*[*(yellow) ]{0,0,4,2,1}*[*(white) ]{8,7,5,2,1}\\
					$\underbrace{\hspace{6.4em}}_{\displaystyle Y}$&&$\underbrace{\hspace{6.4em}}_{\displaystyle X}$&&$\underbrace{\hspace{12.8em}}_{\displaystyle Y\boxplus_{B}X}$
			\end{tabular}}
		\end{center}
		\vspace{1em}
		\item Next, we illustrate $Y\boxplus_{E}X$
		\vspace{1em}
		\begin{center}
			\tiny{\begin{tabular}{ccccc}
					\ydiagram[*(yellow) ]{4,2,1}&$\boxplus_{E}$&\ydiagram[*(red) ]{4,3,1}&$=$&\ydiagram[*(yellow) \nearrow]
					{0,0,0,3+1,0,0}
					*[*(red) \swarrow]
					{0,0,4+1,0,0,0}
					*[*(red) ]{4+4,4+3,4+1}*[*(yellow) ]{0,0,0,4,2,1}*[*(white) ]{8,7,5,4,2,1}\\
					$\underbrace{\hspace{6.4em}}_{\displaystyle Y}$&&$\underbrace{\hspace{6.4em}}_{\displaystyle X}$&&$\underbrace{\hspace{12.8em}}_{\displaystyle Y\boxplus_{E}X}$
			\end{tabular}}
		\end{center}
		\vspace{1em}
		\item Finally, we picture $Y\boxplus_{C}X$
		\vspace{1em}
		\begin{center}
			\tiny{\begin{tabular}{ccccc}
					\ydiagram[*(yellow) ]{4,2,1}&$\boxplus_{C}$&\ydiagram[*(red) ]{4,3,1}&$=$&\ydiagram[*(yellow) \uparrow]
					{0,0,0,3+1,0,0}
					*[*(red) \downarrow]
					{0,0,3+1,0,0,0}
					*[*(red) ]{3+4,3+3,3+1}*[*(yellow) ]{0,0,0,4,2,1}*[*(white) ]{5,3,3,3,2,1}\\
					$\underbrace{\hspace{6.4em}}_{\displaystyle Y}$&&$\underbrace{\hspace{6.4em}}_{\displaystyle X}$&&$\underbrace{\hspace{11.2em}}_{\displaystyle Y\boxplus_{C}X}$
			\end{tabular}}
		\end{center}
		\vspace{1em}
	\end{enumerate}
\end{exam}

By virtue of the bijection between the set of Young diagrams and the set of integer partitions, the sum operations introduced for Young diagrams naturally extend to partitions. In the following, we define the corresponding operations on the set of partitions.

\begin{definition}
	Let $Y=[\nu_1,\dots,\nu_n]$ and $X=[\mu_1,\dots,\mu_m]$ be Young diagrams. Then we define
	\begin{enumerate}
		\item $Y\boxplus_{B}X=[\nu_1+\mu_1,\dots,\nu_1+\mu_m,\nu_2,\dots,\nu_n]$,
		\item $Y\boxplus_{E}X=[\nu_1+\mu_1,\dots,\nu_1+\mu_m,\nu_1,\dots,\nu_n]$,
		\item $Y\boxplus_{C}X=[\nu_1+\mu_1-1,\dots,\nu_1+\mu_m-1,\nu_1,\dots,\nu_n]$.
	\end{enumerate}
\end{definition}

Due to the bijective correspondence between the set of Young diagrams and the set of proper numerical sets, the bonded, end-to-end, and conjoint sums can also be defined for numerical sets. Given two numerical sets $R$ and $S$ with associated Young diagrams $Y_{R}$ and $Y_{S}$, respectively, the bonded, end-to-end, and conjoint sums of $R$ and $S$ are defined as the numerical sets corresponding to the bonded, end-to-end, and conjoint sums of $Y_{R}$ and $Y_{S}$, respectively. 

In particular, when performing bonded sum of $Y_{R}$ and $Y_{S}$, the diagram $Y_{S}$ is vertically shifted by the number of rows in $Y_{R}$ minus one, and horizontally shifted by the number of columns in $Y_{R}$. Effectively, this displaces $Y_{S}$ exactly by the hook length of the top-left box in $Y_{R}$. 

Following this construction, and examining the resulting polygonal path, we observe that $\co(R)$ has been replaced by $\F(R)$, the zero element in $S$ has been eliminated, and each non-zero small element of $S$ is raised by $\F(R)$. Consequently, the small elements of the numerical set associated to the Young diagram $Y_{R}\boxplus_{B}Y_{S}$ are made up of the left elements of $R$, the Frobenius $\F(R)$, and the non-zero small elements of $S$ incremented by $F(R)$.

Over and above that we get the conductor of the resulting numerical set $R\boxplus_{C}S$ as $\co(R\boxplus_{C}S)=\co(R)+\co(S)-1$, and the Frobenius of $R\boxplus_{C}S$ as $\F(R\boxplus_{C}S)=\F(R)+\F(S)$. By analogous descriptions conjoint and end-to-end sums of $R$ and $S$ can be formulated as well. 

\begin{definition}
	Let $R=\{0,r_{1},\dots,r_{n}=\co(R),\rightarrow\}$ and $S=\{0,s_{1},\dots,s_{m}=\co(S),\rightarrow\}$ be numerical sets. Then we define
	\begin{enumerate}
		\item $R\boxplus_{B}S=\{0,r_{1},\dots,r_{n-1},r_{n}-1,s_{1}+r_{n}-1,\dots,s_{m}+r_{n}-1,\rightarrow\},$
		\item $R\boxplus_{E}S=\{0,r_{1},\dots,r_{n},s_{1}+r_{n},\dots,s_{m}+r_{n},\rightarrow\},$
		\item $R\boxplus_{C}S=\{0,r_{1},\dots,r_{n-1},s_{1}+r_{n}-1,\dots,s_{m}+r_{n}-1,\rightarrow\}.$
	\end{enumerate}
\end{definition}

The following lemma states the key properties of the sum operations on numerical sets defined above. The proofs are straightforward and follow directly from the definitions.

\begin{lemma} \label{l2}
	Let $R=\{0,r_{1},\dots,r_{n},\rightarrow\}$ and $S=\{0,s_{1},\dots,s_{m},\rightarrow\}$ be numerical sets with $\G(R)=\{ a_{1},\dots,a_{g} \}$ and $\G(S)=\{ b_{1},\dots,b_{k} \}$, respectively. Then
	\begin{enumerate}
		\item $\G(R\boxplus_{B}S)=\{ a_{1},\dots,a_{g-1},a_{g}+b_{1},\dots,a_{g}+b_{k} \}$ and $\F(R\boxplus_{B}S)=a_{g}+b_{k}$,
		\item $\G(R\boxplus_{E}S)=\{ a_{1},\dots,a_{g},a_{g}+b_{1}+1,\dots,a_{g}+b_{k}+1 \}$ and $\F(R\boxplus_{E}S)=a_{g}+b_{k}+1$,
		\item $\G(R\boxplus_{C}S)=\{ a_{1},\dots,a_{g},a_{g}+b_{1},\dots,a_{g}+b_{k} \}$ and $\F(R\boxplus_{C}S)=a_{g}+b_{k}$.
	\end{enumerate} 
\end{lemma}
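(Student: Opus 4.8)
The plan is to verify the three claimed formulas directly from the explicit descriptions of the small elements of $R\boxplus_{B}S$, $R\boxplus_{E}S$, and $R\boxplus_{C}S$ given in the preceding definition, by computing the complement of each set in $\N_0$. Throughout I would use the standing data: $R=\{0,r_1,\dots,r_n,\rightarrow\}$ has conductor $r_n=\co(R)$ and Frobenius $\F(R)=r_n-1$, so that $a_g=\F(R)=r_n-1$; similarly $\F(S)=s_m-1=b_k$. The observation that makes each computation short is that a gap of any of these sums falls into one of two ranges: either it is a gap of $R$ (lying below $\F(R)$, or below $\co(R)-1$ in the bonded/conjoint case), or it sits in the ``shifted copy of $S$'' region, where the elements present are precisely the shifted small elements of $S$ and hence the missing ones are the shifted gaps of $S$.

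For the bonded sum, I would argue as follows. By definition $R\boxplus_{B}S=\{0,r_1,\dots,r_{n-1},r_n-1,s_1+r_n-1,\dots,s_m+r_n-1,\rightarrow\}$. The left elements $0,r_1,\dots,r_{n-1}$ of $R$ are unchanged, so the gaps of $R$ strictly below $\F(R)=r_n-1$ remain gaps; these are exactly $a_1,\dots,a_{g-1}$ (note $a_g=\F(R)=r_n-1$ is now present as the element $r_n-1$). For integers $z$ with $r_n-1< z$, write $z=(r_n-1)+t$ with $t\ge 1$; then $z$ lies in $R\boxplus_{B}S$ iff $t$ is a non-zero small element of $S$ or $t>\co(S)$, i.e.\ iff $t\in S\setminus\{0\}$. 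Hence the gaps in this range are $(r_n-1)+b_j=a_g+b_j$ for $j=1,\dots,k$, with the largest being $a_g+b_k$, which is therefore $\F(R\boxplus_{B}S)$. This gives part (1). Parts (2) and (3) are entirely analogous: for the end-to-end sum the element $r_n$ is kept and the shifted copy of $S$ starts at $s_1+r_n$, creating an extra gap at $r_n+? $—more precisely the integers strictly between $\co(R)=r_n$ and $r_n+s_1$ are gaps, which together with the old gaps $a_1,\dots,a_g$ and the shifted gaps $a_g+b_j+1$ yields the stated set (here one uses $\co(R)=\F(R)+1=a_g+1$, so the new gaps are $a_g+1,a_g+b_1+1,\dots$ after accounting for which $b_j$ are $\ge 1$); for the conjoint sum one drops $r_n$ and the shifted copy of $S$ begins at $s_1+r_n-1$, so that $a_g=r_n-1$ itself becomes a gap and the shifted gaps are $a_g+b_j$.

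The only genuine subtlety—and the step I would be most careful about—is the bookkeeping at the ``seam'' where $R$ ends and the shifted copy of $S$ begins, since in each case one must correctly identify whether $\F(R)$ itself is a gap of the sum and whether any brand-new gaps are created by the shift. For the bonded sum $\F(R)$ is filled in; for the conjoint sum it becomes a gap; for the end-to-end sum it is a gap and additionally the shift by $r_n$ (rather than $r_n-1$) inserts the extra gap recorded by the ``$+1$'' in the formula. I would make this precise by comparing, in each case, the conductor and Frobenius of the sum (which were already computed in the discussion preceding the lemma: $\co(R\boxplus_C S)=\co(R)+\co(S)-1$ and $\F(R\boxplus_C S)=\F(R)+\F(S)$, with the analogous identities for the other two sums) against the list of small elements, and then simply reading off the complement. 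Since $\F(R)=a_g$ and $\F(S)=b_k$, the Frobenius formulas $a_g+b_k$, $a_g+b_k+1$, $a_g+b_k$ are immediate from $\F(R\boxplus_B S)=\F(R)+\F(S)+1-1$ etc., and serve as a consistency check on the gap sets. Once the seam is handled correctly, everything else is a direct translation and no hard estimate is involved.
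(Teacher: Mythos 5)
Your proof is correct and takes exactly the route the paper intends: the paper offers no argument for this lemma beyond the remark that it ``follows directly from the definitions,'' and your seam-by-seam computation of the complement (unchanged gaps of $R$ below the junction, shifted gaps of $S$ above it) is precisely that direct verification, with the three cases differing only in whether $\F(R)=a_g$ is filled in and by how much the copy of $S$ is translated. One cosmetic slip in your end-to-end parenthetical: $a_g+1=r_n$ is an element of $R\boxplus_{E}S$, not a new gap; the gaps above the seam are exactly $a_g+b_j+1$ for $j=1,\dots,k$, as the main body of your argument correctly states.
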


\begin{remark}
	Let $R=\{0,r_{1},\dots,r_{n},\rightarrow\}$ be a numerical set. Then $\N_0 \boxplus_{B} R=\{ -1,0,r_1-1,\dots,r_n-1,\rightarrow\}$ is not a numerical set, and $\N_0 \boxplus_{C} R=\{ r_1-1,\dots,r_n-1,\rightarrow\}$ is not also a numerical set unless $r_1=1$. However, we naturally have the following.
	\begin{align*}
		R\boxplus_{B}\N_0=&\{0,r_1,\dots,r_{n-1},r_n-1,\rightarrow\},&R\boxplus_{C}\N_0=&\{0,r_1,\dots,r_{n-1},r_n-1,\rightarrow\},
		\\
		R\boxplus_{E}\N_0=&\{0,r_1,\dots,r_n,\rightarrow\},&
		\N_0 \boxplus_{E} R=&\{0,r_1,\dots,r_n,\rightarrow\}.
	\end{align*}
\end{remark}

Notice that $R\boxplus_{E}S=R\boxplus_{C}\{ 0,2,\rightarrow\}\boxplus_{B}S$ for all proper numerical sets $R$ and $S$. Moreover, it is straightforward to verify that the bonded, end-to-end, and conjoint sums are binary operations on the set of proper numerical sets that are closed, associative, and non-commutative. These structural properties lead directly to the following proposition, whose proof follows immediately from the definitions.

\begin{proposition}
	Let $\mathcal{R}$ be the set of all numerical sets and $\mathcal{R}_0$ the set of all proper numerical sets. Then
	\begin{enumerate}
		\item $(\mathcal{R}_0,\boxplus_{B})$ and $(\mathcal{R}_0,\boxplus_{C})$ are semigroups,
		\item $(\mathcal{R},\boxplus_{E})$ is a monoid where $\N_0$ is the identity.
	\end{enumerate} 
\end{proposition}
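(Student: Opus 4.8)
The statement is largely a repackaging of the properties asserted just before it, so the plan is to make those assertions precise: show that each of $\boxplus_B,\boxplus_C,\boxplus_E$ is a well-defined associative binary operation on the relevant set, and that $\N_0$ is a two-sided identity for $\boxplus_E$. Throughout I would argue at whichever level — small elements, partitions, or the gap-set formulas of Lemma~\ref{l2} — is lightest for the step at hand, passing between them via the bijections set up in Section~\ref{sec3}.

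\emph{Closure.} Let $R,S\in\mathcal R_0$. From the explicit small-element formulas, each $R\boxplus_\star S$ contains $0$, is cofinite (arrow at the end), and has a strictly increasing list of listed elements; the only point needing a line is that $r_{n-1}<r_n-1$ for $\boxplus_B$ and $\boxplus_C$, which holds because $\F(R)=r_n-1$ is a gap of $R$, the degenerate case $n=1$ ($R$ ordinary, $\co(R)\ge 2$) being checked directly. By Lemma~\ref{l2} the gap set is finite, so $R\boxplus_\star S$ is a numerical set. It is moreover proper for $\star\in\{B,C\}$, and proper or equal to $\N_0$ for $\star=E$: indeed $R$ proper gives $g\ge1$, hence $a_g$ exists and $a_g+b_k$ (resp.\ $a_g+b_k+1$) is a gap of the sum; and $\G(R\boxplus_\star S)=\emptyset$ forces both gap sets empty, i.e.\ $R=S=\N_0$, which lies outside $\mathcal R_0$ and is possible only for $\boxplus_E$. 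Thus $\boxplus_B,\boxplus_C$ map $\mathcal R_0\times\mathcal R_0\to\mathcal R_0$ and $\boxplus_E$ maps $\mathcal R\times\mathcal R\to\mathcal R$.

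\emph{Associativity.} I would substitute the partition formulas into one another. Writing $Y=[\nu_1,\dots]$, $X=[\mu_1,\dots]$, $Z=[\rho_1,\dots]$, the crucial observation is that the first part of $Y\boxplus_B X$ and of $Y\boxplus_E X$ is $\nu_1+\mu_1$, while that of $Y\boxplus_C X$ is $\nu_1+\mu_1-1$; equivalently, setting $\phi_B(Y)=\phi_E(Y)=\nu_1$ (the number of columns of $Y$) and $\phi_C(Y)=\nu_1-1$, one has $\phi_\star(Y\boxplus_\star X)=\phi_\star(Y)+\phi_\star(X)$ — this is also visible on the numerical-set side from $\co(R\boxplus_\star S)=\co(R)+\co(S)-1$ together with the genus counts of Lemma~\ref{l2}. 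Since $Y\boxplus_\star X$ is obtained from $X$ by adding $\phi_\star(Y)$ to each of its parts and then appending a tail determined by $Y$ alone, both $(Y\boxplus_\star X)\boxplus_\star Z$ and $Y\boxplus_\star(X\boxplus_\star Z)$ unwind to the \emph{same} explicit list, the parts coming from $Z$ being $\phi_\star(Y)+\phi_\star(X)+\rho_j$ by the additivity of $\phi_\star$. One checks en route that the intermediate lists are honest partitions, using $\mu_i\ge1$ and $\nu_1\ge\nu_2$. For $\boxplus_E$, associativity involving a copy of $\N_0$ is automatic once the identity property is known, since such a summand may simply be deleted.

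\emph{Identity and conclusion.} For $\boxplus_E$, the equalities $R\boxplus_E\N_0=R$ and $\N_0\boxplus_E R=R$ are precisely those recorded in the Remark preceding the proposition, so $\N_0$ is a two-sided identity; together with closure and associativity this gives that $(\mathcal R,\boxplus_E)$ is a monoid, while $(\mathcal R_0,\boxplus_B)$ and $(\mathcal R_0,\boxplus_C)$ are semigroups. The main obstacle is bookkeeping rather than conceptual: in the associativity step one must carefully line up the tails produced by the two bracketings and track which part of the first summand is "absorbed" into $\phi_\star$, and in the closure step one must read Lemma~\ref{l2} and the definitions under the conventions $\F(\N_0)=-1$, $\co(\N_0)=0$ so that ordinary sets and $\N_0$ are handled correctly.
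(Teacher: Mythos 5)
Your proposal is correct and follows the same route the paper intends: the paper offers no written proof beyond declaring closure and associativity ``straightforward from the definitions'' and recording the identities $R\boxplus_{E}\N_0=\N_0\boxplus_{E}R=R$ in the preceding remark, and your argument simply carries out exactly that verification (closure via the small-element/gap formulas, associativity via the partition formulas, identity from the remark) with the right attention to the degenerate cases.
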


Furthermore, those sums are not closed in the set of proper numerical semigroups. For example, although  $\{0,2,\rightarrow\}$, $\{0,3,\rightarrow\}$ and $\{0,2,4,\rightarrow\}$ are numerical semigroups, $$\{0,2,4,\rightarrow\}\boxplus_{B}\{0,2,\rightarrow\}=\{0,2,3,5,\rightarrow\},$$ $$\{0,2,\rightarrow\}\boxplus_{E}\{0,3,\rightarrow\}=\{0,2,5,\rightarrow\},$$ $$\{0,2,4,\rightarrow\}\boxplus_{C}\{0,2,\rightarrow\}=\{0,2,5,\rightarrow\}$$ are not numerical semigroups.

\begin{proposition} \label{prp2}
	Let $R$ and $S$ be numerical sets. Then
	\begin{enumerate}
		\item $(R\boxplus_{B}S)^{*}=S^{*}\boxplus_{C}R^{*}$,
		\item $(R\boxplus_{E}S)^{*}=S^{*}\boxplus_{E}R^{*}$,
		\item $(R\boxplus_{C}S)^{*}=S^{*}\boxplus_{B}R^{*}$.
	\end{enumerate} 
\end{proposition}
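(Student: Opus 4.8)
The plan is to extract everything from one computation with conjugate partitions. Recall that if a Young diagram $Y$ has partition $[\nu_1,\dots,\nu_n]$, then $Y^{*}$ has the conjugate partition, whose $j$th part is $\nu^{*}_j=\#\{i:\nu_i\ge j\}$, and that $Y_{R^{*}}=Y_R^{*}$. First I would note that the three identities are equivalent in pairs, so only one needs a direct proof. Applying (1) to the pair $(S^{*},R^{*})$ in place of $(R,S)$ gives $(S^{*}\boxplus_B R^{*})^{*}=R\boxplus_C S$, and dualizing both sides (using $(X^{*})^{*}=X$) yields $(R\boxplus_C S)^{*}=S^{*}\boxplus_B R^{*}$, i.e. (3); symmetrically (3) implies (1). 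Then (2) follows from (1), (3), the identity $R\boxplus_E S=R\boxplus_C\{0,2,\rightarrow\}\boxplus_B S$ recorded above, the (elementary) mixed associativity of these sums, and the fact that $\{0,2,\rightarrow\}$ is symmetric so $\{0,2,\rightarrow\}^{*}=\{0,2,\rightarrow\}$: indeed $\big((R\boxplus_C\{0,2,\rightarrow\})\boxplus_B S\big)^{*}=S^{*}\boxplus_C(R\boxplus_C\{0,2,\rightarrow\})^{*}=S^{*}\boxplus_C\big(\{0,2,\rightarrow\}\boxplus_B R^{*}\big)=\big(S^{*}\boxplus_C\{0,2,\rightarrow\}\big)\boxplus_B R^{*}=S^{*}\boxplus_E R^{*}$. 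Hence it suffices to establish (1).

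To prove (1), write $Y_R=[\nu_1,\dots,\nu_n]$ and $Y_S=[\mu_1,\dots,\mu_m]$. By the partition formula for the bonded sum, $Y_R\boxplus_B Y_S=[\nu_1+\mu_1,\dots,\nu_1+\mu_m,\nu_2,\dots,\nu_n]$. I would then read off its conjugate one column at a time: for $1\le j\le\nu_1$ each of the $m$ parts $\nu_1+\mu_i$ is at least $\nu_1+1>j$, while among $\nu_2,\dots,\nu_n$ exactly $\nu^{*}_j-1$ parts are $\ge j$, so the $j$th part of the conjugate is $m+\nu^{*}_j-1$; for $\nu_1<j\le\nu_1+\mu_1$ none of $\nu_2,\dots,\nu_n$ reaches $j$, so the $j$th part is $\#\{i:\mu_i\ge j-\nu_1\}=\mu^{*}_{j-\nu_1}$. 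Thus $(Y_R\boxplus_B Y_S)^{*}=[\,m+\nu^{*}_1-1,\dots,m+\nu^{*}_{\nu_1}-1,\mu^{*}_1,\dots,\mu^{*}_{\mu_1}\,]$. On the other side, $Y_S^{*}=[\mu^{*}_1,\dots,\mu^{*}_{\mu_1}]$ and $Y_R^{*}=[\nu^{*}_1,\dots,\nu^{*}_{\nu_1}]$, so the partition formula for the conjoint sum gives $Y_S^{*}\boxplus_C Y_R^{*}=[\,\mu^{*}_1+\nu^{*}_1-1,\dots,\mu^{*}_1+\nu^{*}_{\nu_1}-1,\mu^{*}_1,\dots,\mu^{*}_{\mu_1}\,]$. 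Since $\mu^{*}_1=m$, the two partitions agree, which proves $(R\boxplus_B S)^{*}=S^{*}\boxplus_C R^{*}$.

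The step I expect to demand the most care is this conjugation computation: one must use $\mu_i\ge 1$ to see that all $m$ parts $\nu_1+\mu_i$ survive for $j\le\nu_1$, and one must keep straight that for the conjoint sum the entire first summand $Y_S^{*}$ reappears as the lower block (so its top part $\mu^{*}_1$ is present), in contrast to the bonded sum where the top row of the first summand is absorbed; the identity $\mu^{*}_1=m$ is exactly what matches the two expressions. I would also handle separately the degenerate case in which $R$ or $S$ equals $\N_0$ (empty partition), since by the remark above some of these sums with $\N_0$ need not even be numerical sets; in the remaining cases the check is immediate. An alternative, equally routine route bypasses partitions: using Lemma \ref{l2} together with the explicit formula for the dual, one computes the small elements of both sides of (1) directly and finds each to equal $\{0\}\cup\{\text{nonzero left elements of }S^{*}\}\cup\{\F(S)+t:t\text{ a nonzero left element of }R^{*}\}\cup\{\co(R)+\co(S)-1,\rightarrow\}$, whence (1), and then (2) and (3) as before.
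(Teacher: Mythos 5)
Your argument is correct, but it runs on the opposite side of the numerical-set/Young-diagram bijection from the paper's proof. The paper proves (1) by working entirely with small elements and gap sets: it writes out $R\boxplus_B S$ and $\G(R\boxplus_B S)$ explicitly (via Lemma \ref{l2}), applies the defining formula for the dual of a numerical set, and recognizes the result as $S^{*}\boxplus_C R^{*}$; parts (2) and (3) are dismissed with ``the other statements follow in similar ways.'' This is exactly the alternative route you sketch in your last sentences, and your claimed description of the common value of the small elements matches the paper's computation. Your main route instead verifies (1) as an identity of conjugate partitions, using the partition formulas $Y\boxplus_B X=[\nu_1+\mu_1,\dots,\nu_1+\mu_m,\nu_2,\dots,\nu_n]$ and $Y\boxplus_C X=[\nu_1+\mu_1-1,\dots,\nu_1+\mu_m-1,\nu_1,\dots,\nu_n]$ together with $Y_{R^{*}}=Y_R^{*}$; the column count $j\le\nu_1$ versus $\nu_1<j\le\nu_1+\mu_1$ is right, and the matching via $\mu^{*}_1=m$ is the correct pivot. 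You also gain something the paper leaves implicit: the formal derivation of (3) from (1) by the involution $X\mapsto X^{*}$, and of (2) from (1) and (3) via $R\boxplus_E S=R\boxplus_C\{0,2,\rightarrow\}\boxplus_B S$, the self-duality of $\{0,2,\rightarrow\}$, and mixed associativity. The one point you should not wave at is precisely that mixed associativity $(A\boxplus_C B)\boxplus_B C=A\boxplus_C(B\boxplus_B C)$: the paper only asserts associativity of each operation separately (while writing triple sums without parentheses), so a one-line check of this identity belongs in your proof; it does hold, since both sides equal $\{0,a_1,\dots,a_{n-1},b_1+a_n-1,\dots,b_{m-1}+a_n-1,b_m+a_n-2,c_1+b_m+a_n-2,\dots,c_l+b_m+a_n-2,\rightarrow\}$. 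Your caution about the degenerate case $R=\N_0$ or $S=\N_0$ is harmless but unnecessary, as the section restricts to proper numerical sets throughout.
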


\begin{proof}
	Let $R=\{ 0,r_1,\dots,r_n,\rightarrow\}$ with $\G(R)=\{ a_{1},\dots,a_{g} \}$ and $S=\{ 0,s_1,\dots,s_m,\rightarrow \}$ with $\G(S)=\{ b_{1},\dots,b_{k} \}$. Then 
	\begin{align*}
		R\boxplus_{B}S=&\{0,r_{1},\dots,r_{n-1},r_{n}-1,s_{1}+r_{n}-1,\dots,s_{m}+r_{n}-1,\rightarrow\}\\
		=&\{0,r_{1},\dots,r_{n-1},a_g,a_g+s_{1},\dots,a_g+s_{m},\rightarrow\}
	\end{align*}
	with $\G(R\boxplus_{B}S)=\{ a_{1},\dots,a_{g-1},a_{g}+b_{1},\dots,a_{g}+b_{k} \}$ and $f=\F(R\boxplus_{B}S)=a_g+b_k$. Hence,
	\begin{align*}
		(R\boxplus_{B}S)^{*}=&\{0,f-a_g-b_{k-1},\dots,f-a_g-b_1,f-a_{g-1},\dots,f-a_1,a_g+s_m,\rightarrow\}\\
		=&\{0,b_k-b_{k-1},\dots,b_k-b_1,b_k+a_g-a_{g-1},\dots,b_k+a_g-a_1,a_g+s_m,\rightarrow\}\\
		=&\{0,b_k-b_{k-1},\dots,b_k-b_1,s_m,\rightarrow\}\boxplus_{C}\{0,a_g-a_{g-1},\dots,a_g-a_1,a_g+1,\rightarrow\}\\
		=&S^{*}\boxplus_{C}R^{*}.
	\end{align*}
	This shows that the first statement holds. The other statements follow in similar ways.
\end{proof}

\section{Decompositions of Almost Symmetric Numerical Semigroups}\label{sec5}

In this section, we broaden the scope of the Young diagram decompositions previously developed for irreducible numerical semigroups in \cite{SY}, extending them to a particular class of almost symmetric numerical semigroups. In particular, we focus on almost symmetric semigroups whose pseudo-Frobenius numbers form a consecutive sequence, excluding the Frobenius number itself. We introduce new decompositions for this subclass using the combinatorial structure of their associated Young diagrams.

\begin{theorem} \label{thm1}
	Let $R$ be a symmetric numerical set and $R\neq\{0,2,\rightarrow\}$. Then there exist a unique numerical set $S$ such that $$R=S\boxplus_{C}\{0,2,\rightarrow\}\boxplus_{B}S^{*} \text{ or } R=S\boxplus_{B}\{0,2,\rightarrow\}\boxplus_{C}S^{*}.$$
\end{theorem}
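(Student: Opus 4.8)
The plan is to reformulate everything in terms of gap sets. Since $R$ is symmetric, $\F(R)$ is odd; write $\F(R)=2N-1$, so $\g(R)=N$, and symmetry of $R$ is equivalent to
$$y\in\G(R)\iff 2N-1-y\in R\qquad\text{for all }y\in[0,2N-1]\cap\N_0.$$
Moreover $R=\{0,2,\rightarrow\}$ exactly when $N=1$, so throughout we may assume $N\geq2$. I would first record the easy direction: applying Proposition \ref{prp2} twice, together with $\{0,2,\rightarrow\}^{*}=\{0,2,\rightarrow\}$ and $(T^{*})^{*}=T$, shows that $A^{*}=A$ both for $A=S\boxplus_{C}\{0,2,\rightarrow\}\boxplus_{B}S^{*}$ and for $A=S\boxplus_{B}\{0,2,\rightarrow\}\boxplus_{C}S^{*}$; hence any set of either shape is symmetric. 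This makes the statement well posed and reduces the theorem to producing the correct $S$ and proving it unique.

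The decisive observation is that the symmetry relation pairs $N-1$ with $N$, so exactly one of $N-1,N$ lies in $R$; this alternative dictates which of the two shapes occurs. In the case $N\in R$, I would \emph{define} $S$ by $\G(S):=\G(R)\cap[1,N-1]$. Then $N-1\in\G(S)$, so this is a nonempty finite subset of $\N$ with maximum $N-1$; thus $S$ is a proper numerical set with $\F(S)=N-1$, i.e.\ $\co(S)=N$. In the case $N-1\in R$ (so $N\in\G(R)$), I would instead take $\G(S):=(\G(R)\cap[1,N-1])\cup\{N-1\}$, again a proper numerical set with $\co(S)=N$.

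Then I would verify the claimed identity directly on gap sets. Using Lemma \ref{l2} to compute $\G(S\boxplus_{C}\{0,2,\rightarrow\})$ and then $\G((S\boxplus_{C}\{0,2,\rightarrow\})\boxplus_{B}S^{*})$, substituting the description of $\G(S^{*})$ from the definition of the dual, and using that the non-zero left elements of $S$ are $[1,N-1]\setminus\G(S)=R\cap[1,N-1]$, one gets in the case $N\in R$
$$\G\bigl(S\boxplus_{C}\{0,2,\rightarrow\}\boxplus_{B}S^{*}\bigr)=\G(S)\cup\{\,2N-1-x:x\in R\cap[1,N-1]\,\}\cup\{2N-1\}.$$
On the other hand the symmetry relation applied to $y\in[N,2N-1]$ gives $\G(R)\cap[N,2N-1]=\{\,2N-1-x:x\in R\cap[0,N-1]\,\}$, and $\G(R)\cap[1,N-1]=\G(S)$ by construction; so the right-hand side above is exactly $\G(R)$, and since a numerical set is determined by its gaps, $R=S\boxplus_{C}\{0,2,\rightarrow\}\boxplus_{B}S^{*}$. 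The case $N-1\in R$ is handled the same way with $\boxplus_{B}$ and $\boxplus_{C}$ exchanged, using the analogous computation of $\G(S\boxplus_{B}\{0,2,\rightarrow\}\boxplus_{C}S^{*})$.

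For uniqueness, the same gap computations show that a decomposition $R=S\boxplus_{C}\{0,2,\rightarrow\}\boxplus_{B}S^{*}$ forces $\co(S)=\g(R)=N$ and $\G(S)=\G(R)\cap[1,N-1]$, while a decomposition $R=S\boxplus_{B}\{0,2,\rightarrow\}\boxplus_{C}S^{*}$ forces $\co(S)=N$ and $\G(S)=(\G(R)\cap[1,N-1])\cup\{N-1\}$; in either case $S$ is uniquely determined. Finally, the first shape entails $N\in R$ and the second $N\notin R$, so the two shapes are mutually exclusive, and the dichotomy above guarantees exactly one of them is realized — yielding a unique $S$. I expect the only genuine work to be the bookkeeping in the third paragraph: tracking how the largest gap of $S$ (respectively of $S\boxplus_{C}\{0,2,\rightarrow\}$) is deleted and how the remaining gaps are translated under the iterated conjoint and bonded sums, and then identifying the shifted dual-gaps $\{2N-1-x\}$ with precisely the upper half of $\G(R)$ via the symmetry relation.
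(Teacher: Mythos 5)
Your proof is correct and takes essentially the same route as the paper: the same dichotomy on whether $\frac{\co(R)}{2}\in R$, and the same choice of $S$ in each case (your $\G(S)=\G(R)\cap[1,N-1]$, respectively $(\G(R)\cap[1,N-1])\cup\{N-1\}$, is exactly the paper's $S=\{0,r_1,\dots,r_k,\rightarrow\}$, respectively $\{0,r_1,\dots,r_{k-1},r_k+1,\rightarrow\}$), with the verification carried out on gap sets via Lemma \ref{l2} instead of on small elements via the definitions of the sums. One genuine improvement: you actually prove uniqueness (by showing each shape forces $\co(S)=N$ and pins down $\G(S)$, and that the two shapes are mutually exclusive), whereas the paper's proof only exhibits $S$ and never addresses the uniqueness asserted in the statement.
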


\begin{proof} Let $R=\{ 0,r_{1},\dots,r_{n}=\co(R),\rightarrow \}$ with $r_{k}\leq\frac{\co(R)}{2}<r_{k+1}$ for a small element $r_k$ of $R$ and $\G(R)=\{ a_{1},\dots,a_{n-1}, a_{n}=\F(R) \}$. Since $R$ is symmetric, for each gap $a_i$, we know that $\F(R)-a_i\in R$. This implies that $R=\{ 0,\F(R)-a_{n-1},\dots,\F(R)-a_{1},r_{n},\rightarrow \}$ where $r_{i}=\F(R)-a_{r-i}$ for $i=1,2,\dots,n-1$. We have two cases to examine:
	
	\paragraph{Case 1:} In case $\frac{\co(R)}{2}\in R$, we choose $S=\{ 0,r_{1},\dots,r_{k},\rightarrow \}$ with $\co(S)=r_k=\frac{\co(R)}{2}$ and $f=\F(S)=\frac{\co(R)}{2}-1$. Then we get  $\G(S)=\{ a_{1},\dots,a_{n-k} \}$ and $$S^{*}=\{0, f-a_{n-k-1},\dots,f-a_{1},r_k,\rightarrow \}.$$ Thus,
	\begin{align*}
		S\boxplus_{C}\{0,2,\rightarrow\}\boxplus_{B}S^{*}=&\{ 0,r_{1},\dots,r_{k},\rightarrow \}\boxplus_{C}\{0,2,\rightarrow\}\boxplus_{B}\\
		&\hspace{2cm}\{0, f-a_{n-k-1},\dots,f-a_{1},r_k,\rightarrow \}\\
		=&\{ 0,r_{1},\dots,r_k+1,\rightarrow \}\boxplus_{B}\\
		&\hspace{2cm}\{0,f-a_{n-k-1},\dots,f-a_{1},r_k,\rightarrow \}\\
		=&\{ 0,r_{1},\dots,r_{k-1},r_{k}+1-1,r_{k}+1+f-a_{n-k-1}-1,\dots\\
		&\hspace{2cm}\dots,r_k+1+f-a_{1}-1,r_k+1+r_k-1, \rightarrow \}\\
		=&\{ 0,r_{1},\dots,r_{k},r_k+f-a_{n-k-1},\dots,r_k+f-a_{1},2r_k, \rightarrow \}\\
		=&\{ 0,r_{1},\dots,r_{k},\F(R)-a_{n-k-1},\dots,\F(R)-a_{1},\co(R), \rightarrow \}\\
		=&\{ 0,r_{1},\dots,r_{k},r_{k+1},\dots,r_{n-1},r_{n}, \rightarrow \}=R.
	\end{align*}
	
	\paragraph{Case 2:} In case $\frac{\co(R)}{2}\notin R$, we select $S=\{ 0,r_{1},\dots,r_{k-1},r_k+1,\rightarrow \}$ with the gap set $\G(S)=\{ a_{1},\dots,a_{n-k},r_k \}$. Then we get $\F(R)-\frac{\co(R)}{2}\in R \implies \frac{\co(R)}{2}-1\in R\implies \F(S)=r_{k}=\frac{\co(R)}{2}-1$ and  $\co(S)=r_k+1=\frac{\co(R)}{2}$. The dual of $S$ is then $S^{*}=\{0, r_k-a_{n-k-1},\dots,r_k-a_{1},r_k+1,\rightarrow \}.$ Therefore,
	\begin{align*}
		S\boxplus_{B}\{0,2,\rightarrow\}\boxplus_{C}S^{*}=&\{ 0,r_{1},\dots,r_{k-1},r_k+1,\rightarrow \}\boxplus_{B}\{0,2,\rightarrow\}\boxplus_{C}\\
		&\hspace{2cm}\{0, r_k-a_{n-k-1},\dots,r_k-a_{1},r_k+1,\rightarrow \}\\
		=&\{ 0,r_{1},\dots,r_{k-1},r_k,r_k+2,\rightarrow \}\boxplus_{C}\\
		&\hspace{2cm}\{0, r_k-a_{n-k-1},\dots,r_k-a_{1},r_k+1,\rightarrow \}\\
		=&\{ 0,r_{1},\dots,r_{k-1},r_{k},r_{k}+2+r_k-a_{n-k-1}-1,\dots\\
		&\hspace{2cm}\dots,r_k+2+r_k-a_{1}-1,r_k+2+r_k+1-1, \rightarrow \}\\
		=&\{ 0,r_{1},\dots,r_{k-1},r_{k},2r_k+1-a_{n-k-1},\dots,2r_k+1-a_{1},2r_k+2, \rightarrow \}\\
		=&\{ 0,r_{1},\dots,r_{k},\F(R)-a_{n-k-1},\dots,\F(R)-a_{1},\co(R), \rightarrow \}\\
		=&\{ 0,r_{1},\dots,r_{k},r_{k+1},\dots,r_{n-1},r_{n}, \rightarrow \}=R.
	\end{align*}
\end{proof}

The converse of Theorem \ref{thm1} also holds. This means that for a given arbitrary numerical set, we can always construct a symmetric numerical set using its dual. So, the next is the symmetric construction lemma.

\begin{lemma}\label{sym}
	Let $S$ be a numerical set. Then the following are always symmetric:
	\begin{multicols}{2}
		\begin{enumerate}
			\item $S\boxplus_{C}\{0,2,\rightarrow\}\boxplus_{B}S^{*}$,
			\item $S\boxplus_{B}\{0,2,\rightarrow\}\boxplus_{C}S^{*}$,
			\item $S^{*}\boxplus_{C}\{0,2,\rightarrow\}\boxplus_{B}S$,
			\item $S^{*}\boxplus_{B}\{0,2,\rightarrow\}\boxplus_{C}S$.
		\end{enumerate}
	\end{multicols}
\end{lemma}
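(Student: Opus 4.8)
The plan is to reduce all four cases to a single computation, then verify symmetry directly from the description of the small elements or, more cleanly, from the gap structure. First I would observe that statements (3) and (4) are obtained from (1) and (2) by replacing $S$ with $S^{*}$ and using $(S^{*})^{*}=S$, so it suffices to prove (1) and (2). Next, I would note that (1) and (2) are dual to each other: applying Proposition \ref{prp2} and the involution $(-)^{**}=\mathrm{id}$, we get
\begin{align*}
\bigl(S\boxplus_{C}\{0,2,\rightarrow\}\boxplus_{B}S^{*}\bigr)^{*}
&=\bigl(\{0,2,\rightarrow\}\boxplus_{B}S^{*}\bigr)^{*}\boxplus_{B}S^{*}\\
&=S^{**}\boxplus_{C}\{0,2,\rightarrow\}^{*}\boxplus_{B}S^{*}\\
&=S^{**}\boxplus_{C}\{0,2,\rightarrow\}\boxplus_{B}S^{*},
\end{align*}
using that $\{0,2,\rightarrow\}$ is symmetric, hence self-dual. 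Wait — this actually shows $(1)$ is self-dual once one is careful with the triple product and associativity, which by the Proposition is exactly the statement that the numerical set equals its own dual, i.e. symmetry. So the cleanest route is: prove that $R:=S\boxplus_{C}\{0,2,\rightarrow\}\boxplus_{B}S^{*}$ satisfies $R^{*}=R$ by a direct application of Proposition \ref{prp2}, pushing the $*$ through the two $\boxplus$ operations and using $\{0,2,\rightarrow\}^{*}=\{0,2,\rightarrow\}$ together with $S^{**}=S$. The same manipulation handles (2), and then (3), (4) follow by the substitution $S\mapsto S^{*}$.

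The one genuine subtlety — and the step I expect to be the main obstacle — is the handling of associativity and parenthesization in the triple sum. Proposition \ref{prp2} is stated for binary sums, and the mixed associativity of $\boxplus_{B},\boxplus_{C},\boxplus_{E}$ (noted in the text only for each operation individually, and via the identity $R\boxplus_{E}S=R\boxplus_{C}\{0,2,\rightarrow\}\boxplus_{B}S$) must be invoked carefully so that $(A\boxplus_{C}B\boxplus_{B}C)^{*}$ unwinds to $C^{*}\boxplus_{C}B^{*}\boxplus_{B}A^{*}$. Concretely, I would write $R=(S\boxplus_{C}\{0,2,\rightarrow\})\boxplus_{B}S^{*}$, apply part (1) of Proposition \ref{prp2} to get $R^{*}=(S^{*})^{*}\boxplus_{C}(S\boxplus_{C}\{0,2,\rightarrow\})^{*}=S\boxplus_{C}(\{0,2,\rightarrow\}^{*}\boxplus_{B}S^{*})=S\boxplus_{C}\{0,2,\rightarrow\}\boxplus_{B}S^{*}=R$, where the middle equality uses part (3) of Proposition \ref{prp2} on the inner $\boxplus_{C}$. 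Checking that no hidden reparenthesization is needed — i.e., that $(A\boxplus_{C}B)\boxplus_{B}C$ and $A\boxplus_{C}(B\boxplus_{B}C)$ agree as numerical sets — is the bookkeeping I would want to nail down, most easily by computing both small-element lists from the Definition and seeing they coincide.

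As an alternative, self-contained fallback in case the associativity bookkeeping becomes delicate, I would prove (1) by a direct gap computation: using Lemma \ref{l2}, compute $\G(S\boxplus_{C}\{0,2,\rightarrow\})$, then $\G\bigl((S\boxplus_{C}\{0,2,\rightarrow\})\boxplus_{B}S^{*}\bigr)$, and simultaneously compute the small-element list of $R$ from the Definition; writing $\F(S)=f$ and $\co(S)=c$, one checks that $R$ has Frobenius number $2c$ (say) and that for every $a\in[0,\F(R)]$ exactly one of $a,\F(R)-a$ lies in $R$, splitting into the three zones coming from the left part $S$, the single gap from $\{0,2,\rightarrow\}$, and the shifted copy of $S^{*}$, and using the defining duality relation $\G(S^{*})=\{\F(S)-r\mid r\text{ a nonzero left element of }S\}\cup\{\F(S)\}$ to see the two outer zones are exchanged by $a\mapsto\F(R)-a$ while the middle element is fixed. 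This is essentially the reverse of the computation already carried out in the proof of Theorem \ref{thm1}, so it can be quoted almost verbatim. Either way, (2) is the mirror image, and (3), (4) follow by replacing $S$ with $S^{*}$.
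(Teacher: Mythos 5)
Your main argument is exactly the paper's proof: apply Proposition \ref{prp2} twice to push the dual through the triple sum, using that $\{0,2,\rightarrow\}$ is self-dual and $(S^{*})^{*}=S$, so that each expression equals its own dual and hence is symmetric, with parts (2)--(4) following in the same way. The mixed-associativity point you flag, namely that $(A\boxplus_{C}B)\boxplus_{B}C=A\boxplus_{C}(B\boxplus_{B}C)$, is a legitimate (if routine) check that the paper also leaves implicit; it does hold, as one sees by writing out both small-element lists.
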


\begin{proof}
	By Proposition \ref{prp2}, since $\{0,2,\rightarrow\}$ is symmetric we get
	\begin{align*}
		(S\boxplus_{C}\{0,2,\rightarrow\}\boxplus_{B}S^{*})^{*}= &(S^{*})^{*}\boxplus_{C}(S\boxplus_{C}\{0,2,\rightarrow\})^{*}\\ = &(S^{*})^{*}\boxplus_{C}(\{0,2,\rightarrow\})^{*}\boxplus_{B}S^{*}\\ = &S\boxplus_{C}\{0,2,\rightarrow\}\boxplus_{B}S^{*},\\
	\end{align*}
	and so $S\boxplus_{C}\{0,2,\rightarrow\})\boxplus_{B}S^{*}$ is symmetric. The proof follows for the others in similar ways.
\end{proof}

\begin{exam}
	Let $R=\{ 0,2,5,6,8,10,\rightarrow \}$ with $\F(R)=9$ and $\G(R)=\{ 1,3,4,7,9 \}$. It is symmetric and $\frac{\co(R)}{2}=5\in R$. So, by Theorem \ref{thm1}, $S=\{ 0,2,5,\rightarrow \}$ with $\G(S)=\{ 1,3,4 \}$ and $S^{*}=\{ 0,1,3,5,\rightarrow \}$ where we get $R=S\boxplus_{C}\{0,2,\rightarrow\}\boxplus_{B}S^{*}$. Then the Young diagram $Y_{R}=Y_{S}\boxplus_{C}Y_{\{0,2,\rightarrow\}}\boxplus_{B}Y_{S^{*}}$ is illustrated as follows.
	\vspace{1em}
	\begin{center}
		\tiny{\begin{tabular}{ccccccc}
				\ydiagram[*(yellow) ]{2,2,1}&$\boxplus_{C}$&\ydiagram[*(orange) ]{1}&$\boxplus_{B}$&\ydiagram[*(red) ]{3,2}&$=$&\ydiagram[*(red) ]{2+3,2+2}*[*(orange) ]{0,1+1}*[*(yellow) ]{,0,0,2,2,1,}*[*(white) ]{5,4,2,2,1}\\
				$\underbrace{\hspace{3.2em}}_{\displaystyle Y_{S}}$&&$\underbrace{\hspace{1.6em}}_{\displaystyle Y_{\{0,2,\rightarrow\}}}$&&$\underbrace{\hspace{4.8em}}_{\displaystyle Y_{S^{*}}}$&&$\underbrace{\hspace{8em}}_{\displaystyle Y_{R}=Y_{S}\boxplus_{C}Y_{\{0,2,\rightarrow\}}\boxplus_{B}Y_{S^{*}}}$
		\end{tabular}}
	\end{center}
	\vspace{1em}
\end{exam}

\begin{exam}
	Let $R=\{ 0,2,5,7,8,10,12\rightarrow \}$ with $\G(R)=\{ 1,3,4,6,9,11 \}$ and $\F(R)=11$. It is symmetric and $\frac{\co(R)}{2}=6\notin R$. Therefore, by Theorem \ref{thm1}, $S=\{ 0,2,6,\rightarrow \}$ with $\G(S)=\{ 1,3,4,5 \}$ and $S^{*}=\{ 0,1,2,4,6,\rightarrow \}$ where we get $R=S\boxplus_{B}\{0,2,\rightarrow\}\boxplus_{C}S^{*}$. Then the Young diagram $Y_{R}=Y_{S}\boxplus_{B}Y_{\{0,2,\rightarrow\}}\boxplus_{C}Y_{S^{*}}$ is illustrated as follows.
	\vspace{1em}
	\begin{center}
		\tiny{\begin{tabular}{ccccccc}
				\ydiagram[*(yellow) ]{2,2,2,1}&$\boxplus_{B}$&\ydiagram[*(orange) ]{1}&$\boxplus_{C}$&\ydiagram[*(red) ]{4,3}&$=$&\ydiagram[*(red) ]{2+4,2+3}*[*(orange) ]{0,0,2+1}*[*(yellow) ]{0,0,2,2,2,1,}*[*(white) ]{6,5,3,2,2,1}\\
				$\underbrace{\hspace{3.2em}}_{\displaystyle Y_{S}}$&&$\underbrace{\hspace{1.6em}}_{\displaystyle Y_{\{0,2,\rightarrow\}}}$ &&$\underbrace{\hspace{6.4em}}_{\displaystyle Y_{S^{*}}}$&&$\underbrace{\hspace{9.6em}}_{\displaystyle Y_{R}=Y_{S}\boxplus_{B}Y_{\{0,2,\rightarrow\}}\boxplus_{C}Y_{S^{*}}}$
		\end{tabular}}
	\end{center}
	\vspace{1em}
\end{exam}

When decomposing a symmetric numerical semigroup $R=\{0,r_1\dots,r_n,\rightarrow\}$ into a numerical semigroup and its dual, some certain considerations must be made. By Theorem \ref{thm1}, when $\frac{\co(R)}{2}\notin R$, the natural choice would be $S=\{ 0,r_1,\dots,r_{k-1},r_k+1,\rightarrow \}$ where $r_k=\frac{\co(R)}{2}-1$ for some $k\in[1,n]\cap\N_0$. In this case, if $r_k$ is not a minimal generator of $R$, then $S$ fails to be a numerical semigroup. Thus, to ensure $S$ remains a numerical semigroup, we need to consider a slightly different $S$ and a different sum.

Hence, when $R=\{0,r_1\dots,r_n,\rightarrow\}$ is a symmetric numerical semigroup with $\frac{\co(R)}{2}\notin R$, to ensure $S$ remains a numerical semigroup in the decomposition, we need to choose that $S=\{ 0,r_1,\dots,r_{k},r_k+2\rightarrow \}$ with $r_k=\frac{\co(R)}{2}-1$ for some $k\in[1,n]\cap\N_0$. In this way, we guarantee that $S$ is a numerical semigroup. However, the decomposition takes the form $R=S\boxplus_{C}(S^{*}-1)$ where $S^{*}-1=\{ s-1 \mid s\in S^{*}\cap\N \}\cup\{0\}$. To prove this statement, we shall use a similar way as in the proof of Theorem \ref{thm1}.

Additionally, since $S\boxplus_{C}\{0,2,\rightarrow\}\boxplus_{B}T=S\boxplus_{E}T$ for all proper numerical sets $S$ and $T$, in case $\frac{\co(R)}{2}\in R$, we can equivalently consider a decomposition $R=S\boxplus_{E}S^{*}$. Thus, we get the following.

\begin{corollary} \cite[cf. Theorem 4.4]{SY}
	Let $R$ be a symmetric numerical semigroup. Then there exists a unique numerical semigroup $S$ such that $$R=S\boxplus_{C}\{0,2,\rightarrow\}\boxplus_{B}S^{*}=S\boxplus_{E}S^{*} \text{ or } R=S\boxplus_{C}(S^{*}-1).$$
\end{corollary}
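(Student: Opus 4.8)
The plan is to derive the corollary directly from Theorem \ref{thm1} together with the two structural observations recorded in the paragraphs immediately preceding the statement, namely that $S\boxplus_{C}\{0,2,\rightarrow\}\boxplus_{B}T=S\boxplus_{E}T$ for all proper numerical sets, and the adjustment needed when $\frac{\co(R)}{2}\notin R$. So the first step is to split into the two cases of Theorem \ref{thm1} according to whether $\frac{\co(R)}{2}\in R$ or not, and in each case verify that the numerical set $S$ produced by that theorem can be taken to be a numerical semigroup, or else replaced by one, yielding the stated decomposition.

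In Case 1, $\frac{\co(R)}{2}\in R$, Theorem \ref{thm1} gives $R=S\boxplus_{C}\{0,2,\rightarrow\}\boxplus_{B}S^{*}$ with $S=\{0,r_1,\dots,r_k,\rightarrow\}$ where $r_k=\frac{\co(R)}{2}$. Here I would argue that $S$ is already a numerical semigroup: since $R$ is a numerical semigroup and $r_k=\frac{\co(R)}{2}$, for any small elements $r_i,r_j\le r_k$ their sum $r_i+r_j$ lies in $R$ and is $\le \co(R)$; if it exceeds $r_k$ it is $\ge \co(S)=r_k$ so lies in $S$ by the arrow, and if it is $\le r_k$ it is a small element of $R$ below $r_k$, hence a small element of $S$. (One also uses that $R$ symmetric forces the small elements of $R$ below $\frac{\co(R)}{2}$ to behave compatibly.) Then by $S\boxplus_{C}\{0,2,\rightarrow\}\boxplus_{B}S^{*}=S\boxplus_{E}S^{*}$ both displayed forms hold simultaneously, and uniqueness of $S$ is inherited from the uniqueness already established in Theorem \ref{thm1}.

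In Case 2, $\frac{\co(R)}{2}\notin R$, the set from Theorem \ref{thm1} is $S'=\{0,r_1,\dots,r_{k-1},r_k+1,\rightarrow\}$ with $r_k=\frac{\co(R)}{2}-1$, and as the paragraph before the corollary notes, $S'$ need not be a numerical semigroup when $r_k$ is not a minimal generator. So instead I set $S=\{0,r_1,\dots,r_k,r_k+2,\rightarrow\}$, check it is a numerical semigroup (same closure argument as above, now using that $r_k\in R$ and $r_k+1\notin R$ so $\co(S)=r_k+2$, together with $r_k+r_k=2r_k\ge \co(S)$ since $2r_k=\co(R)-2\ge \co(R)/2$ in the relevant range), and then recompute $S\boxplus_{C}(S^{*}-1)$ exactly as in the proof of Theorem \ref{thm1}, tracking how subtracting one from the nonzero elements of $S^{*}$ compensates for having enlarged $S'$ to $S$; the telescoping $r_k$-shifts should again collapse to $\{0,r_1,\dots,r_k,\F(R)-a_{n-k-1},\dots,\F(R)-a_1,\co(R),\rightarrow\}=R$.

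The main obstacle I anticipate is the bookkeeping in Case 2: one must confirm that $S^{*}-1$ really is a well-defined proper numerical set (it is, since $0$ is re-adjoined and $\co(S^{*})=\co(S)\ge 2$ guarantees a positive element survives), and then carry the ``$-1$'' through the conjoint-sum formula of Definition with all the $r_k$-shifts, verifying that the replacement of $S'$ by the semigroup $S$ is exactly undone by the shift so that the output is still $R$ and not $R$ off by one. A secondary point requiring care is the uniqueness clause: I should note that the $S$ here is the unique numerical semigroup making the decomposition work — determined by $k$, hence by the position of $\frac{\co(R)}{2}$ relative to the small elements of $R$ — so that the ``or'' in the statement is genuinely an exclusive alternative governed by whether $\frac{\co(R)}{2}\in R$.
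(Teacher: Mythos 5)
Your proposal matches the paper's own (largely sketched) argument for this corollary: the paper likewise derives it from Theorem \ref{thm1} by splitting on whether $\frac{\co(R)}{2}\in R$, invoking the identity $S\boxplus_{C}\{0,2,\rightarrow\}\boxplus_{B}S^{*}=S\boxplus_{E}S^{*}$ in the first case, and in the second case replacing $S'=\{0,r_1,\dots,r_{k-1},r_k+1,\rightarrow\}$ by the semigroup $S=\{0,r_1,\dots,r_k,r_k+2,\rightarrow\}$ and rerunning the computation of Theorem \ref{thm1} with $S^{*}-1$. The only slips are cosmetic: in your Case 2 closure check $\co(S)=r_k+2=\frac{\co(R)}{2}+1$ (not $\frac{\co(R)}{2}$), and the cleanest way to see that the ``$-1$'' exactly compensates is to note that $S=S'\boxplus_{B}\{0,2,\rightarrow\}$ while $S^{*}-1=(S')^{*}$, so the Case 2 identity of Theorem \ref{thm1} collapses directly to $R=S\boxplus_{C}(S^{*}-1)$.
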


\begin{remark} \label{rmk}
	Let $R$ be a numerical semigroup with type $\ty(R)=t$, and let $\pf(R)=\{ a_1,\dots,a_t=\F(R) \}$ where $a_i<a_{i+1}$ for $i=1,\dots,t-1$. By Theorem 2.4 in \cite{nari}, $R$ is almost symmetric if and only if $a_i+a_{t-i}=a_t$ for all $i\in [1,t-1]\cap\N_0$.
\end{remark}

\begin{lemma} \label{lem}
	Let $R=\{ 0,r_1,\dots,r_n,\rightarrow\}$ be an almost symmetric numerical semigroup with type $t\geq2$, and let $\pf(R)=\{ a_1,\dots,a_t=\F(R) \}$. Suppose $r_k<a_1<\dots<a_{t-1}<r_{k+1}$ where $a_i-a_{i-1}=1$ for $i=2,\dots,t-1$. Then $R'=\{ 0,r_1,\dots,r_k,r_{k+1}',\dots,r_n',\rightarrow \}$, where $r_{k+i}'=r_{k+i}-t+1$ for $i=1,\dots,n-k$, is a symmetric numerical set.
\end{lemma}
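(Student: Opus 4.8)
The plan is to verify the definition of a symmetric numerical set for $R'$ directly: pair each $z\in[0,\F(R')]\cap\N_0$ with $\F(R')-z$ and show that exactly one of the two lies in $R'$. The key preliminary is the Frobenius number of $R'$. Since $a_{t-1}=a_1+(t-2)$ by the consecutiveness hypothesis, Remark \ref{rmk} gives $a_1+a_{t-1}=a_t=\F(R)$, so $\F(R)=2a_1+t-2$; as passing from $R$ to $R'$ lowers $\co(R)$ by $t-1$, we obtain $\co(R')=r_n-(t-1)$ and $\F(R')=\F(R)-(t-1)=2a_1-1$, which is odd. I would also record the routine points that make $R'$ an honest numerical set: the $t-1$ gaps $a_1,\dots,a_{t-1}$ lie strictly between $r_k$ and $r_{k+1}$, so $r_{k+1}-r_k\geq t$ and the listed elements of $R'$ stay strictly increasing; and for the same reason $r_{k+1}'=r_{k+1}-(t-1)\geq (a_{t-1}+1)-(t-1)=a_1$.

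Next I would split $[0,\F(R')]\cap\N_0$ into three zones relative to $r_k$ and $a_1$: a \emph{low} zone $[0,r_k]$, on which $R'$ and $R$ coincide; a \emph{high} zone of integers $x$ with $r_k<x\leq\F(R')$, on which $x\in R'$ if and only if $x+(t-1)\in R$ (because an $x\in R'$ with $x>r_k$ is some $r_j-(t-1)$ with $j\geq k+1$, and conversely $x+(t-1)\in R$ with $x>r_k$ forces $x+(t-1)\geq r_{k+1}$); and, wedged between them, the interval $[r_k+1,a_1-1]$, which lies entirely in $\G(R')$ since every element of $R'$ is $\leq r_k$ or $\geq r_{k+1}'\geq a_1$. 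Because $r_k\leq a_1-1$ and $\F(R')=2a_1-1$ is odd, in each pair $\{z,\F(R')-z\}$ exactly one member falls in $[0,a_1-1]$, hence in the low zone or in $[r_k+1,a_1-1]$; its partner then satisfies $a_1\leq\F(R')-z\leq\F(R')<\co(R')$, so lies in the high zone. Thus it suffices to take $z$ in each of those two locations and check that $\F(R')-z\in R'$ precisely when $z\notin R'$.

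If $z\in[r_k+1,a_1-1]$, then $z\notin R'$, while $z$ is a gap of $R$ in $(r_k,r_{k+1})$ with $z<a_1$, so $z\notin\pf(R)$; almost symmetry of $R$ forces $\F(R)-z\in R$, and since $\F(R)-z\geq a_{t-1}+1>r_k$ and $\F(R)-z<\co(R)$, it equals some $r_j$ with $j\geq k+1$, so $\F(R')-z=r_j-(t-1)\in R'$. If instead $z\in[1,r_k]$ (the case $z=0$ being immediate), then $z\in R'\iff z\in R$ and $\F(R')-z\in R'\iff\F(R)-z\in R$ by the high-zone description (here $\F(R)-z>a_{t-1}>r_k$ again makes the translation work); and exactly one of $z,\F(R)-z$ lies in $R$, because if both did then $\F(R)=z+(\F(R)-z)\in R$, contradicting $\F(R)\notin R$, whereas if neither did then $z$ would be a gap of $R$ with $z<a_1$, hence not in $\pf(R)$, so $\F(R)-z\in R$ by almost symmetry --- again a contradiction. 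In all cases exactly one of $z,\F(R')-z$ belongs to $R'$, so $R'$ is symmetric.

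I expect the only genuine difficulty to be the bookkeeping rather than any single idea: one must ensure every assertion about membership in $R$ translates correctly through the shift by $t-1$ into one about membership in $R'$, which relies on the partner $\F(R)-z$ landing strictly between $r_k$ and $\co(R)$, and this is exactly where the hypothesis $z<a_1$ together with $a_1+a_{t-1}=\F(R)$ is used. A secondary point to watch is the identity $\F(R')=2a_1-1$: the entire pairing is organized around the split value $a_1$, so the appeal to Remark \ref{rmk} to obtain $a_1+a_{t-1}=\F(R)$ is essential, and without it the three zones would not line up.
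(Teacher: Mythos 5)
Your argument is correct and takes essentially the same route as the paper's: both split the interval $[0,\F(R')]$ at $a_1$, use almost symmetry of $R$ together with the fact that no gap below $a_1$ is a pseudo-Frobenius number, and transfer membership between $R$ and $R'$ via the shift by $t-1$ above $r_k$. The only substantive difference is that you verify the full ``exactly one of $z$, $\F(R')-z$'' condition for each pair (invoking closure of $R$ under addition to rule out both $z$ and $\F(R)-z$ lying in $R$), whereas the paper's proof only checks the implication that every gap of $R'$ is reflected into an element of $R'$.
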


\begin{proof}
	By Remark \ref{rmk}, we have $a_1<\frac{\F(R)}{2}<a_{t-1}$. Let $a\in[0,\F(R')]$ and suppose $a\in\Z\setminus R'$. Then $a\in[0,a_1)$ or $a\in[a_1,\F(R')]$.
	
	\paragraph{Case 1:} If $a\in[0,a_1)$, then $a\in\Z\setminus R'$ implies $a\in\Z\setminus R$ and $a\notin \pf(R)$. Since $R$ is almost symmetric, $\F(R)-a\in R$, and thus $\F(R)-a=r_{j}$ for some $j\in[k+1,n-1]\cap\N$. This means $\F(R)-a-t+1=r_j-t+1 \in R'$, which implies $\F(R')-a=r_j'\in R'$.
	
	\paragraph{Case 2:} If $a\in[a_1,\F(R')]$, then $a+t-1\in [a_1+t-1,\F(R')+t-1]$, and so $a+t-1\in(a_{t-1},\F(R)]$. Thus, $a+t-1\in\Z\setminus R$ and $a+t-1\notin \pf(R)$. Since $R$ is almost symmetric, $\F(R)-a-t+1\in R$, and $\F(R)-a-t+1=r_j$ for some $j\in[1,k]\cap\N$. This means $\F(R)-t+1-a=\F(R')-a=r_j$.
	
	Hence, $R'$ is a symmetric numerical set.
\end{proof}

\begin{theorem} \label{mainthm}
	Let $R=\{ 0,r_1,\dots,r_n,\rightarrow\}$ be an almost symmetric numerical semigroup with type $t\geq2$ and $R\neq\{0,t+1,\rightarrow\}$, and let $\pf(R)=\{ a_1,\dots,a_t=\F(R) \}$. Suppose $r_{k-1}<a_1<\dots<a_{t-1}<r_k$ where $a_i-a_{i-1}=1$ for $i=2,\dots,t-1$. Then there is a unique numerical set $S$ such that $$R=S\boxplus_{C}\{0,t+1,\rightarrow\}\boxplus_{B}S^{*} \text{ or } R=S\boxplus_{B}\{0,t+1,\rightarrow\}\boxplus_{C}S^{*}.$$
\end{theorem}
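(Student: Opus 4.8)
The plan is to reduce Theorem \ref{mainthm} to Theorem \ref{thm1} by way of Lemma \ref{lem}. Given the almost symmetric semigroup $R$ with pseudo-Frobenius numbers $a_1<\dots<a_{t-1}<a_t=\F(R)$, where $a_1,\dots,a_{t-1}$ are consecutive and lie strictly between $r_{k-1}$ and $r_k$, Lemma \ref{lem} already produces a symmetric numerical set $R'=\{0,r_1,\dots,r_{k-1},r_k',\dots,r_n',\rightarrow\}$ obtained by lowering every small element of $R$ from $r_k$ onward by $t-1$. First I would apply Theorem \ref{thm1} to $R'$ (checking that the hypothesis $R'\neq\{0,2,\rightarrow\}$ is equivalent to $R\neq\{0,t+1,\rightarrow\}$): there is a unique numerical set $S$ with $R'=S\boxplus_{C}\{0,2,\rightarrow\}\boxplus_{B}S^{*}$ or $R'=S\boxplus_{B}\{0,2,\rightarrow\}\boxplus_{C}S^{*}$, according to whether $\tfrac{\co(R')}{2}\in R'$ or not. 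The key claim is then that replacing the middle factor $\{0,2,\rightarrow\}$ by $\{0,t+1,\rightarrow\}$ exactly undoes the shift of Lemma \ref{lem}, i.e. $R=S\boxplus_{C}\{0,t+1,\rightarrow\}\boxplus_{B}S^{*}$ in Case 1 and $R=S\boxplus_{B}\{0,t+1,\rightarrow\}\boxplus_{C}S^{*}$ in Case 2, with the same $S$.

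The heart of the argument is therefore a direct computation, parallel to the proof of Theorem \ref{thm1}, tracking how the middle factor affects the small-element list. Recall from the definitions that $Y\boxplus_{C}\{0,t+1,\rightarrow\}$ adds $t$ columns of length one on top of the last column of $Y$, after which $\boxplus_{B}S^{*}$ attaches $S^{*}$ shifted by the hook length of the top-left box. Concretely, writing $\co(S)=c$ and $\F(S)=c-1$, one finds $S\boxplus_{C}\{0,t+1,\rightarrow\}=\{0,s_1,\dots,s_{m-1},c+t-1,\rightarrow\}$, and then bonding $S^{*}$ on top replaces the tail by $c$ followed by the nonzero small elements of $S^{*}$ each raised by $c+t-2$. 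The point is that $\{0,2,\rightarrow\}$ contributes a shift of $1$ to the upper part whereas $\{0,t+1,\rightarrow\}$ contributes a shift of $t$, i.e. $t-1$ more — precisely the amount by which $R'$ was obtained from $R$. So the small elements $r_k',\dots,r_n'$ of $R'$ get bumped back up to $r_k,\dots,r_n$, and since the lower part ($0,r_1,\dots,r_{k-1}$) and $S$, $S^{*}$ are untouched, the resulting set is exactly $R$. Case 2 is handled the same way, now using $S=\{0,r_1,\dots,r_{k-1},r_k'+1,\rightarrow\}$ as in Theorem \ref{thm1}'s Case 2 applied to $R'$.

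For uniqueness I would argue that $S$ is forced: from either decomposition one recovers $\co(S)$ and $\F(S)$ from $\co(R)$ and the position $r_{k-1}$ (the split point, which is intrinsic to $R$ because it is determined by the location of the block of pseudo-Frobenius numbers), and then $S$ itself is determined as the ``lower half'' of $R$ (equivalently, the lower half of the canonically associated symmetric set $R'$), while $S^{*}$ is its dual; conversely Lemma \ref{sym} together with the explicit shift computation shows these really are inverse constructions, so two distinct $S$ cannot yield the same $R$. Alternatively one invokes the uniqueness clause of Theorem \ref{thm1} directly: the map $R\mapsto R'$ is a bijection from this class of almost symmetric semigroups onto the relevant symmetric sets (its inverse being the up-shift by $t-1$), so uniqueness of $S$ for $R'$ transfers verbatim to uniqueness for $R$.

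The main obstacle I anticipate is purely bookkeeping: getting the index shifts and the two boundary conventions (conductor versus Frobenius, the ``$-1$'' appearing in $\boxplus_{B}$ and $\boxplus_{C}$, and the extra ``$+1$'' in the Case 2 choice of $S$) to line up so that the $t-1$ discrepancy cancels exactly. A secondary subtlety is making sure the dichotomy $\tfrac{\co(R')}{2}\in R'$ versus $\notin R'$ translates into a statement one can read off from $R$ itself — one should check that $\tfrac{\co(R')}{2}=\tfrac{\co(R)-(t-1)}{2}$ and that its membership is governed by whether $r_{k-1}$ equals this value, i.e. whether the pseudo-Frobenius block sits symmetrically around the midpoint; this is exactly the content of $a_1<\tfrac{\F(R)}{2}<a_{t-1}$ from Remark \ref{rmk}. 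Once these alignments are pinned down, the verification is a routine chain of equalities mirroring the display in the proof of Theorem \ref{thm1}.
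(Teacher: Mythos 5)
Your proposal follows essentially the same route as the paper: Lemma \ref{lem} produces the symmetric set $R'$, the dichotomy $\frac{\co(R')}{2}\in R'$ or not selects $S$ as the lower half of $R'$ (exactly the $S$ of Theorem \ref{thm1}), and the direct computation shows that widening the middle factor from $\{0,2,\rightarrow\}$ to $\{0,t+1,\rightarrow\}$ shifts the upper part by the extra $t-1$ needed to recover $R$ from $R'$ (the paper simply inlines this computation rather than citing Theorem \ref{thm1}). The only corrections needed are the bookkeeping ones you already anticipate: $S\boxplus_{C}\{0,t+1,\rightarrow\}$ ends in $\co(S)+t$, not $\co(S)+t-1$, and in Case 2 the correct choice is $S=\{0,r_1,\dots,r_{k-2},r_{k-1}+1,\rightarrow\}$ with $\F(S)=r_{k-1}=\frac{\co(R')}{2}-1$, since $r_{k-1}$ (not $r_k'$) is the left element of $R'$ immediately below the midpoint.
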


\begin{proof}
	By Remark \ref{rmk}, if $t$ is odd, we have $a_{\frac{t-1}{2}} < \frac{\F(R)}{2} < \frac{\co(R)}{2}=a_{\frac{t+1}{2}}$, and if $t$ is even, we have $a_{\frac{t}{2}} = \frac{\F(R)}{2} < \frac{\co(R)}{2}<a_{\frac{t}{2}+1}$. This means $r_{k-1}<\frac{\co(R)}{2}<r_k$ where $\frac{\co(R)}{2}-r_{k-1}>\frac{t-1}{2}$ and $r_k-\frac{\co(R)}{2}\geq\frac{t-1}{2}$.
	
	Let $R'=\{ 0,r_1,\dots,r_{k-1},r_k',\dots,r_n',\rightarrow \}$, where $r_{k+i}'=r_{k+i}-t+1$ for $i=0,\dots,n-k$. By Lemma \ref{lem}, we know that $R'$ is symmetric. On the other hand, we have $\co(R')=\co(R)-t+1$ and $\F(R')=\co(R)-t$ which implies  $r_{k-1}<\frac{\co(R')}{2}\leq r_k'$. Then we get two cases to consider.
	
	\paragraph{Case 1:} In case $\frac{\co(R')}{2}\in R'$, we get $r_k'=\frac{\co(R')}{2}=\frac{\co(R)-t+1}{2}$ and we choose $$S=\{ 0,r_1,\dots,r_{k-1},r_k',\rightarrow \}.$$ Since $R'$ is symmetric, $\g(S)=n-k$. Then let $\G(S)=\{b_1,\dots,b_{n-k}\}$, $f=\F(S)=b_{n-k}=\frac{\co(R)-t-1}{2}$ and $$S^{*}=\{ 0,f-b_{n-k-1}\dots,f-b_1,r_k',\rightarrow\},$$
	where $\F(R')-b_i=r_{n-i}'$ for $i=1,\dots,n-k$. Therefore,
	\begin{align*}
		S\boxplus_{C}\{0,t+1,\rightarrow\}\boxplus_{B}S^{*}=&\{ 0,r_{1},\dots,r_{k}',\rightarrow \}\boxplus_{C}\{0,t+1,\rightarrow\}\boxplus_{B}\\ &\hspace{2.4cm}\{0, f-b_{n-k-1},\dots,f-b_{1},r_k',\rightarrow \}\\
		=&\{ 0,r_{1},\dots,r_{k-1},r_k'+t,\rightarrow \}\boxplus_{B}\{0, f-b_{n-k-1},\dots,f-b_{1},r_k',\rightarrow \}\\
		=&\{ 0,r_{1},\dots,r_{k-1},r_k'+t-1,r_k'+t-1+f-b_{n-k-1},\dots\\
		&\hspace{2.4cm}\dots,r_k'+t-1+f-b_{1},r_k'+t-1+r_k', \rightarrow \}.
	\end{align*}
	Hence, since $r_k'=r_k-t+1$ and $r_k'+f=\co(R)-t=\F(R')$, we get the following
	\begin{align*}
		S\boxplus_{C}\{0,t+1,\rightarrow\}\boxplus_{B}S^{*}=&\{ 0,r_{1},\dots,r_{k},\F(R')-b_{n-k-1}+t-1,\dots\\ &\hspace{2.4cm}\dots,\F(R')-b_{1}+t-1,\co(R), \rightarrow \}\\
		=&\{ 0,r_{1},\dots,r_{k},r_{k+1}'+t-1,\dots,r_{n-1}'+t-1,\co(R), \rightarrow \}\\
		=&\{ 0,r_{1},\dots,r_{k},r_{k+1},\dots,r_{n-1},r_{n}, \rightarrow \}=R.
	\end{align*}
	
	\paragraph{Case 2:} In case $\frac{\co(R')}{2}\notin R'$, we choose $S=\{ 0,r_1,\dots,r_{k-2},r_{k-1}+1,\rightarrow \}$ with $\G(S)=\{ b_1,\dots,b_{n-k},r_{k-1} \}$. Then we get $\F(R')-\frac{\co(R')}{2}=\frac{\co(R')}{2}-1\in R'$, and so $$f=\F(S)=r_{k-1}=\frac{\co(R')}{2}-1 \text{ and } \co(S)=\frac{\co(R')}{2}.$$ We also get $$S^{*}=\{ 0,f-b_{n-k}\dots,f-b_1,\frac{\co(R')}{2},\rightarrow\}.$$
	Therefore,
	\begin{align*}
		S\boxplus_{B}\{0,t+1,\rightarrow\}\boxplus_{C}S^{*}=&\{ 0,r_{1},\dots,r_{k-2},r_{k-1}+1,\rightarrow \}\boxplus_{B}\{0,t+1,\rightarrow\}\boxplus_{C}\\
		&\hspace{3cm}\{0,f-b_{n-k},\dots,f-b_{1},\frac{\co(R')}{2},\rightarrow \}\\
		=&\{ 0,r_{1},\dots,r_{k-2},r_{k-1},\frac{\co(R')}{2}+t,\rightarrow \}\boxplus_{C}\\
		&\hspace{2cm}\{0, \frac{\co(R')}{2}-1-b_{n-k},\dots,\frac{\co(R')}{2}-1-b_{1},\frac{\co(R')}{2},\rightarrow \}\\
		=&\{ 0,r_{1},\dots,r_{k-1},\co(R')-1-b_{n-k}+t-1,\dots\\
		&\hspace{3cm}\dots,\co(R')-1-b_1+t-1,\co(R')+t-1, \rightarrow \}.
	\end{align*}
	
	Since $R'$ is symmetric, we get $\co(R')-1-b_j=\F(R')-b_j=r_{n-j}'$ for $j=1,\dots,n-k$. Then
	
	\begin{align*}
		S\boxplus_{B}\{0,t+1,\rightarrow\}\boxplus_{C}S^{*}=&\{ 0,r_{1},\dots,r_{k-1},r_k'+t-1,\dots,r_{n-1}'+t-1,r_n'+t-1, \rightarrow \}\\=&\{0,r_1,\dots,r_k,\dots,r_n,\rightarrow\}=R.
	\end{align*}
	
\end{proof}

\begin{exam}
	Let $R=\{0,4,8,12,14,15,16,18,19,20,22,\rightarrow \}$. It is an almost symmetric numerical semigroup with $\pf(R)=\{10,11,21\}$ and $\ty(R)=3$. We find $$R'=\{ 0,4,8,10,12,13,14,16,17,18,20,\rightarrow\}$$ and we get $\frac{\co(R')}{2}=10\in R'$. Therefore, by Theorem \ref{mainthm}, $S=\{ 0,4,8,10,\rightarrow \}$ where we have $\G(S)=\{ 1,2,3,5,6,7,9 \}$ and $S^{*}=\{ 0,2,3,4,6,7,8,10,\rightarrow\}$. Hence, we get $$R=S\boxplus_{C}\{0,4,\rightarrow\}\boxplus_{B}S^{*},$$
	and the Young diagram of $Y_R=Y_S\boxplus_{C}Y_{\{0,4,\rightarrow\}}\boxplus_{B}Y_{S^{*}}$ is depicted below
	
	\vspace{1em}
	\begin{center}
		\tiny{\begin{tabular}{ccccccc}
				\ydiagram[*(yellow) ]{3,2,2,2,1,1,1}&$\boxplus_{C}$&\ydiagram[*(orange) ]{1,1,1}&$\boxplus_{B}$&\ydiagram[*(red) ]{7,4,1}&$=$&\ydiagram[*(red) ]{3+7,3+4,3+1}*[*(orange) ]{0,0,2+1,2+1,2+1}*[*(yellow) ]{0,0,0,0,0,3,2,2,2,1,1,1}*[*(white) ]{10,7,4,3,3,3,2,2,2,1,1,1}\\
				$\underbrace{\hspace{4.8em}}_{\displaystyle Y_{S}}$&&$\underbrace{\hspace{1.6em}}_{\displaystyle Y_{\{0,4,\rightarrow\}}}$&&$\underbrace{\hspace{11.2em}}_{\displaystyle Y_{S^{*}}}$&&$\underbrace{\hspace{16em}}_{\displaystyle Y_{R}=Y_{S}\boxplus_{C}Y_{\{0,4,\rightarrow\}}\boxplus_{B}Y_{S^{*}}}$
		\end{tabular}}
	\end{center}
	\vspace{1em}
	
\end{exam}

\begin{exam}
	Let $R=\{0,7,9,14,16,17,18,19,20,21,23,\rightarrow \}$. It is an almost symmetric numerical semigroup with $\PF(R)=\{ 10,11,12,22 \}$ and $\ty(R)=4$. We find $$R'=\{ 0, 7, 9, 11, 13, 14, 15, 16, 17, 18, 20,\rightarrow \}$$
	for which we have $\frac{\co(R')}{2}=10 \notin R'$. Thus, by Theorem \ref{mainthm}, we get $S=\{ 0,7,10,\rightarrow \}$, and so $\G(S)=\{ 1,2,3,4,5,6,8,9 \}$ and $S^{*}=\{ 0,1,3,4,5,6,7,8,10,\rightarrow \}$. Hence, we get $$R=S\boxplus_{B}\{0,5,\rightarrow\}\boxplus_{C}S^{*},$$
	and the Young diagram of $Y_R=Y_S\boxplus_{B}Y_{\{0,5,\rightarrow\}}\boxplus_{C}Y_{S^{*}}$ is depicted below
	
	\vspace{1em}
	\begin{center}
		\tiny{\begin{tabular}{ccccccc}
				\ydiagram[*(yellow) ]{2,2,1,1,1,1,1,1}&$\boxplus_{B}$&\ydiagram[*(orange) ]{1,1,1,1}&$\boxplus_{C}$&\ydiagram[*(red) ]{8,2}&$=$&\ydiagram[*(red) ]{2+8,2+2}*[*(orange) ]{0,0,2+1,2+1,2+1,2+1}*[*(yellow) ]{0,0,0,0,0,2,2,1,1,1,1,1,1}*[*(white) ]{10,4,3,3,3,3,2,1,1,1,1,1,1}\\
				$\underbrace{\hspace{3.2em}}_{\displaystyle Y_{S}}$&&$\underbrace{\hspace{1.6em}}_{\displaystyle Y_{\{0,5,\rightarrow\}}}$&&$\underbrace{\hspace{11.2em}}_{\displaystyle Y_{S^{*}}}$&&$\underbrace{\hspace{16em}}_{\displaystyle Y_{R}=Y_{S}\boxplus_{B}Y_{\{0,5,\rightarrow\}}\boxplus_{C}Y_{S^{*}}}$
		\end{tabular}}
	\end{center}
	\vspace{1em}
\end{exam}

Let $R=\{0,r_1\dots,r_n,\rightarrow\}$ be an almost symmetric numerical semigroup with the notation and assumptions as in Theorem \ref{mainthm}. Similarly, to decompose $R$ into a numerical semigroup and its dual, we need to be careful. When $\frac{\co(R')}{2}\notin R'$, the naive choice $S=\{ 0,r_1,\dots,r_{k-2}, r_{k-1}+1,\rightarrow \}$ where $r_{k-1}=\frac{\co(R')}{2}-1$ may fail to be a numerical semigroup if $r_{k-1}$ is not a minimal generator for $S$. Thus, to guarantee $S$ is a numerical semigroup always, we instead choose $S=\{ 0,r_1,\dots,r_{k-1},r_{k-1}+2,\rightarrow \}$ where $r_{k-1}=\frac{\co(R')}{2}-1$ and consider a slightly different sum. However, the decomposition becomes as $R=S\boxplus_{C}S^{*}$ when $\ty(R)=2$, i.e. $R$ is pseudo-symmetric, and the decomposition becomes as $R=S\boxplus_{C}\{0,t-1,\rightarrow\}\boxplus_{C}S^{*}$ when $\ty(R)>2$. To prove this statement, we shall use a similar way as in the proof of Theorem \ref{mainthm}.

Furthermore, notice that for any positive integer $k>1$ and any numerical set $T$, we have $\{0,k+1,\rightarrow\}\boxplus_{B}T=\{0,k,\rightarrow\}\boxplus_{E}T$. Thus, we get the following corollaries.

\begin{corollary} \cite[cf. Theorem 4.10]{SY}
	Let $R=\{ 0,r_1,\dots,r_n,\rightarrow\}$ be a pseudo-symmetric numerical semigroup and $R\neq \{0,3,\rightarrow\}$. Then there is a unique numerical semigroup $S$ such that $$R=S\boxplus_{C}\{0,3,\rightarrow\}\boxplus_{B}S^{*}=S\boxplus_{C}\{0,2,\rightarrow\}\boxplus_{E}S^{*} \text{ or } R=S\boxplus_{C}S^{*}.$$	
\end{corollary}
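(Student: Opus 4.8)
The plan is to observe that a pseudo-symmetric numerical semigroup is exactly an almost symmetric numerical semigroup of type $t=2$, and then to specialise Theorem \ref{mainthm} to $t=2$, keeping track of when the numerical set it produces can be taken to be a numerical semigroup. Writing $\pf(R)=\{a_1,a_2=\F(R)\}$, Remark \ref{rmk} forces $a_1+a_1=a_2$, hence $a_1=\F(R)/2$; in particular $\{0,t+1,\rightarrow\}=\{0,3,\rightarrow\}$, the requirement ``$a_i-a_{i-1}=1$ for $i=2,\dots,t-1$'' is vacuous, and the excluded semigroup $R=\{0,3,\rightarrow\}$ is precisely $R=\{0,t+1,\rightarrow\}$. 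Since $a_1=\F(R)/2\notin R$ and $a_1<\co(R)$, there is an index $k$ with $r_{k-1}<a_1<r_k$, so the hypotheses of Theorem \ref{mainthm} hold; it yields a unique numerical set $S$ realising one of its two decompositions, according to whether $\frac{\co(R')}{2}$ lies in the symmetric set $R'$ of Lemma \ref{lem} (Case 1) or not (Case 2).

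First I would treat Case 1. Here Theorem \ref{mainthm} gives $S=\{0,r_1,\dots,r_{k-1},r_k',\rightarrow\}$ with $r_k'=\frac{\co(R')}{2}$, and I would check that $S$ is closed under addition: if $x,y\in S$ with $x+y<\co(S)=r_k'$, then $x$ and $y$ are elements of $S$ below $r_k'<r_k$, so $x,y\in\{0,r_1,\dots,r_{k-1}\}\subseteq R$; thus $x+y\in R$, and since the only elements of $R$ below $r_k$ are $0,r_1,\dots,r_{k-1}$ we conclude $x+y\in S$. Hence $R=S\boxplus_{C}\{0,3,\rightarrow\}\boxplus_{B}S^{*}$ with $S$ a numerical semigroup, and combining associativity with the identity $\{0,3,\rightarrow\}\boxplus_{B}S^{*}=\{0,2,\rightarrow\}\boxplus_{E}S^{*}$ (the case $k=2$ of $\{0,k+1,\rightarrow\}\boxplus_{B}T=\{0,k,\rightarrow\}\boxplus_{E}T$) yields the first displayed chain of equalities; uniqueness of $S$ as a numerical set, hence as a numerical semigroup, is part of Theorem \ref{mainthm}.

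Next I would handle Case 2, where the numerical set delivered by Theorem \ref{mainthm}, namely $S_0=\{0,r_1,\dots,r_{k-2},r_{k-1}+1,\rightarrow\}$ with $r_{k-1}=\frac{\co(R')}{2}-1$, need not be closed under addition. Following the discussion preceding the corollary I would replace it by $S=\{0,r_1,\dots,r_{k-1},r_{k-1}+2,\rightarrow\}$; the same closure check as in Case 1 (elements of $S$ below $\co(S)=r_{k-1}+2$ lie in $\{0,r_1,\dots,r_{k-1}\}\subseteq R$, their sums stay in $R$ below $r_k$, hence return to $S$) shows $S$ is a numerical semigroup. I would then recompute $S\boxplus_{C}S^{*}$ along the lines of Case 2 of the proof of Theorem \ref{mainthm}, now with $t=2$ and a conjoint sum on both sides: expanding the definition of $\boxplus_{C}$ and using that $R'$ is symmetric collapses the small elements of $S\boxplus_{C}S^{*}$ to $\{0,r_1,\dots,r_{k-1},r_k'+1,\dots,r_n'+1,\rightarrow\}$, which equals $R$ because $t=2$ makes $r_j'+1=r_j$ for $j\ge k$. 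For uniqueness, if $R=S'\boxplus_{C}(S')^{*}$ for some numerical semigroup $S'$, then Lemma \ref{l2} gives $\F(R)=2\F(S')$ and $\co(R)=2\co(S')-1$, so $\co(S')=\frac{\co(R)+1}{2}$ is forced, and the shape of a conjoint sum forces the left elements of $S'$ to be exactly the small elements of $R$ below $\co(S')$; hence $S'=S$.

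The only content genuinely new relative to Theorem \ref{mainthm} is (i) the verification that the adjusted $S$ is a numerical semigroup in each case, which is elementary but rests on knowing precisely which integers below $r_k$ lie in $R$, and (ii) the bookkeeping converting the Case 2 output into the cleaner form $R=S\boxplus_{C}S^{*}$ with the correct $S$, together with its uniqueness. I expect step (ii) --- pinning down the right $S$ and proving it unique --- to be the main obstacle, although it is in essence a rerun of the Case 2 computation in the proof of Theorem \ref{mainthm}.
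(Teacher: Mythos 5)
Your proposal is correct and follows essentially the same route the paper intends: specialise Theorem \ref{mainthm} to $t=2$, use the identity $\{0,3,\rightarrow\}\boxplus_{B}T=\{0,2,\rightarrow\}\boxplus_{E}T$ for the first form, and in the second case replace the numerical set $S=\{0,r_1,\dots,r_{k-2},r_{k-1}+1,\rightarrow\}$ by the semigroup $\{0,r_1,\dots,r_{k-1},r_{k-1}+2,\rightarrow\}$ and rerun the Case 2 computation to obtain $R=S\boxplus_{C}S^{*}$, exactly as the discussion preceding the corollary prescribes. Your added closure checks and the uniqueness argument via $\co(S)=\tfrac{\co(R)+1}{2}$ are sound and, if anything, more explicit than what the paper records.
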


\begin{corollary}
	Let $R=\{ 0,r_1,\dots,r_n,\rightarrow\}$ be an almost symmetric numerical semigroup with type $t>2$ not equal to $\{0,t+1,\rightarrow\}$, and let $\pf(R)=\{ a_1,\dots,a_t=\F(R) \}$. Suppose $r_{k-1}<a_1<\dots<a_{t-1}<r_k$ where $a_i-a_{i-1}=1$ for $i=2,\dots,t-1$. Then there is a unique numerical semigroup $S$ such that $$R=S\boxplus_{C}\{0,t+1,\rightarrow\}\boxplus_{B}S^{*}=S\boxplus_{C}\{0,t,\rightarrow\}\boxplus_{E}S^{*} \text{ or } R=S\boxplus_{C}\{0,t-1,\rightarrow\}\boxplus_{C}S^{*}.$$
\end{corollary}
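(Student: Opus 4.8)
The plan is to reuse Theorem~\ref{mainthm}, whose hypotheses are exactly those of the corollary, and to upgrade the numerical \emph{set} $S$ it produces to a numerical \emph{semigroup}, then rewrite the middle factor. I retain all notation of Theorem~\ref{mainthm}: let $R'$ be the symmetric numerical set of Lemma~\ref{lem}. Since $R'$ is symmetric and has the same number $n$ of nonzero small elements as $R$, one has $\g(R')=\co(R')-n$ and $2\g(R')=\co(R')$, hence $\co(R')=2n$; the dichotomy of Theorem~\ref{mainthm} is whether $n=\tfrac{\co(R')}{2}\in R'$. Throughout I will also use the identity $\{0,k+1,\rightarrow\}\boxplus_{B}T=\{0,k,\rightarrow\}\boxplus_{E}T$ (valid for $k>1$, and here $t\geq 3$), associativity of $\boxplus_{B},\boxplus_{C},\boxplus_{E}$, and the easy identity $\{0,2,\rightarrow\}\boxplus_{C}\{0,t-1,\rightarrow\}\boxplus_{C}\{0,2,\rightarrow\}=\{0,t+1,\rightarrow\}$.

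In Case~1 ($\tfrac{\co(R')}{2}\in R'$), Theorem~\ref{mainthm} gives $R=S\boxplus_{C}\{0,t+1,\rightarrow\}\boxplus_{B}S^{*}$ with $S=\{0,r_{1},\dots,r_{k-1},r_{k}',\rightarrow\}$ and $\co(S)=r_{k}'\leq r_{k}$. I claim this $S$ is already a numerical semigroup: $r_{1},\dots,r_{k-1}$ are left elements of $R$, so for $i,j\leq k-1$ the sum $r_{i}+r_{j}$ lies in $R$; if $r_{i}+r_{j}<\co(S)\leq r_{k}$ then $r_{i}+r_{j}$ is a small element of $R$ below $r_{k}$, hence one of $0,r_{1},\dots,r_{k-1}$, and otherwise $r_{i}+r_{j}\geq\co(S)$; either way $r_{i}+r_{j}\in S$. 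Applying the identity with $k=t$, $T=S^{*}$ then yields the second displayed form $R=S\boxplus_{C}\{0,t,\rightarrow\}\boxplus_{E}S^{*}$. In Case~2 ($\tfrac{\co(R')}{2}\notin R'$), Theorem~\ref{mainthm} gives $R=S_{0}\boxplus_{B}\{0,t+1,\rightarrow\}\boxplus_{C}S_{0}^{*}$ with $S_{0}=\{0,r_{1},\dots,r_{k-2},r_{k-1}+1,\rightarrow\}$, which need not be a semigroup. I replace it by $S:=S_{0}\boxplus_{B}\{0,2,\rightarrow\}=\{0,r_{1},\dots,r_{k-1},r_{k-1}+2,\rightarrow\}$. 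The same argument as in Case~1 shows $S$ is a numerical semigroup, the one new point being that $r_{k-1}+1$ is a gap of $R$ (it lies strictly between the consecutive small elements $r_{k-1}$ and $r_{k}$, and $r_{k}\geq r_{k-1}+3$ since $t\geq 3$), so no sum of small elements of $S$ equals $r_{k-1}+1=\co(S)-1$. Since $S^{*}=(\{0,2,\rightarrow\})^{*}\boxplus_{C}S_{0}^{*}=\{0,2,\rightarrow\}\boxplus_{C}S_{0}^{*}$ by Proposition~\ref{prp2}(1), associativity and the elementary identity above give
\[
S\boxplus_{C}\{0,t-1,\rightarrow\}\boxplus_{C}S^{*}=S_{0}\boxplus_{B}\bigl(\{0,2,\rightarrow\}\boxplus_{C}\{0,t-1,\rightarrow\}\boxplus_{C}\{0,2,\rightarrow\}\bigr)\boxplus_{C}S_{0}^{*}=S_{0}\boxplus_{B}\{0,t+1,\rightarrow\}\boxplus_{C}S_{0}^{*}=R.
\]
(Alternatively one can verify $R=S\boxplus_{C}\{0,t-1,\rightarrow\}\boxplus_{C}S^{*}$ directly with the sum formulas, exactly as in the proof of Theorem~\ref{mainthm}.)

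For uniqueness I would first observe that the two displayed forms force different conductors of $S$: a conductor count gives $\co(S)=\tfrac{\co(R')}{2}$ in the first form and $\co(S)=\tfrac{\co(R')}{2}+1$ in the second, so no numerical semigroup can satisfy both. If $S$ satisfies the first form it is a numerical set satisfying the first alternative of Theorem~\ref{mainthm}, hence is its unique $S$, and this can only occur in Case~1 (in Case~2, $S_{0}\boxplus_{C}\{0,t+1,\rightarrow\}\boxplus_{B}S_{0}^{*}$ and $S_{0}\boxplus_{B}\{0,t+1,\rightarrow\}\boxplus_{C}S_{0}^{*}$ differ already in the small element created between the $S_{0}$-part and the $S_{0}^{*}$-part, so $S_{0}$ does not satisfy the first form). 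If $S$ satisfies the second form, I would show it must be of the shape $U\boxplus_{B}\{0,2,\rightarrow\}$ for a numerical set $U$ (equivalently, that its second largest small element equals $\co(S)-2$), whence $U$ satisfies the second alternative of Theorem~\ref{mainthm}, so $U=S_{0}$ and $S=S_{0}\boxplus_{B}\{0,2,\rightarrow\}$. The step I expect to be the main obstacle is precisely this one --- pinning down the normal form of a numerical semigroup that realizes the conjoint decomposition and then descending to the uniqueness in Theorem~\ref{mainthm} --- together with the fact that the rewritings above invoke associativity \emph{between} different sums, which the paper uses freely but which is slightly stronger than the within-operation associativity recorded for $(\mathcal{R}_{0},\boxplus_{B})$ and $(\mathcal{R}_{0},\boxplus_{C})$; I would record that mixed associativity first, directly from the closed forms of $R\boxplus_{B}S$, $R\boxplus_{C}S$ and $R\boxplus_{E}S$.
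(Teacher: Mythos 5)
Your construction coincides with the paper's: in Case 1 you keep the $S$ produced by Theorem \ref{mainthm}, and in Case 2 you pass from $S_0=\{0,r_1,\dots,r_{k-2},r_{k-1}+1,\rightarrow\}$ to $S=\{0,r_1,\dots,r_{k-1},r_{k-1}+2,\rightarrow\}$ with middle factor $\{0,t-1,\rightarrow\}$, which is exactly the choice made in the discussion preceding the corollary. The one real difference is how the Case 2 identity is obtained: the paper re-runs the small-element computation of Theorem \ref{mainthm}, whereas you factor $S=S_0\boxplus_{B}\{0,2,\rightarrow\}$, dualize via Proposition \ref{prp2}, and collapse $\{0,2,\rightarrow\}\boxplus_{C}\{0,t-1,\rightarrow\}\boxplus_{C}\{0,2,\rightarrow\}=\{0,t+1,\rightarrow\}$. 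This is shorter, and you rightly isolate mixed associativity $(A\boxplus_{B}B)\boxplus_{C}C=A\boxplus_{B}(B\boxplus_{C}C)$ as the only thing to verify; it does follow from the closed formulas. Your checks that $S$ is a semigroup in both cases --- sums of left elements of $S$ lie in $R$ and below $\co(S)\leq r_k$ they must be among $0,r_1,\dots,r_{k-1}$, and in Case 2 the extra value $r_{k-1}+1$ is a gap of $R$ --- are correct and go beyond what the paper records.

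On uniqueness you stop at a sketch and flag the normal-form step as the main obstacle, but that detour is unnecessary. In either displayed form the conductor of $S$ is forced, $\co(S)=\tfrac{\co(R)-t+1}{2}$ resp.\ $\tfrac{\co(R)-t+3}{2}$ (so no $S$ satisfies both, as you note), and since $t\geq 3$ every small element contributed by the middle factor and by the $S^{*}$-block is at least $\co(S)$; hence the left elements of $S$ must be exactly the small elements of $R$ below $\co(S)$, so within each form $S$ is completely determined by $R$ and $t$. The only remaining point --- that the two forms cannot be realized by two \emph{different} semigroups, one each --- is settled by testing the forced candidate for the ``wrong'' form: e.g.\ in Case 2 the first form would make $r_{k-1}+t$ a small element of $R$, yet $r_{k-1}<r_{k-1}+t<r_k$ (as $r_k\geq r_{k-1}+t+1$), so it is a gap. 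To be fair, the paper spells out none of this either (Theorem \ref{mainthm}'s own proof establishes only existence), so this is a shared omission rather than a divergence; your existence argument is sound and your overall route is the paper's.
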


For a given numerical semigroup $R$, we will give conditions on $R$ which guarantees that $R\boxplus_{C}\{0,t+1,\rightarrow\}\boxplus_{B}R^{*}$, $R\boxplus_{B}\{0,t+1,\rightarrow\}\boxplus_{C}R^{*}$ and $R\boxplus_{C}\{0,t-1,\rightarrow\}\boxplus_{C}R^{*}$ are numerical semigroups.


\begin{theorem}
	Let $R=\{ 0,r_1,\dots,r_n,\rightarrow\}$ be a numerical semigroup with $\G(R)=\{ a_1,\dots,a_g \}$ and $t>2$. Then
	\begin{enumerate}
		\item $R\boxplus_{C}\{0,t+1,\rightarrow\}\boxplus_{B}R^{*}=R\boxplus_{C}\{0,t,\rightarrow\}\boxplus_{E}R^{*}$ is a numerical semigroup if and only if $r_n,r_n+1,\dots,r_n+t-2$ are minimal generators for $R$ where $t<r_{n-1}+r_1-r_n+2$, and $2r_n-r_{i}-r_{j}+t-2\neq r_k$ $\forall i,j,k\in[0,n-1]\cap\N_0$.
		\item $R\boxplus_{B}\{0,t+1,\rightarrow\}\boxplus_{C}R^{*}$ is a numerical semigroup if and only if $r_n,r_n+1,\dots,r_n+t-1$ are minimal generators for $R$ where $t<r_{n-1}+r_1-r_n+1$, as well as  $r_n-r_{i}+t-1\neq r_k$ and $2r_n-r_{i}-r_{j}+t-2\neq r_k$ $\forall i,j,k\in[0,n-1]\cap\N_0$.
		\item $R\boxplus_{C}\{0,t-1,\rightarrow\}\boxplus_{C}R^{*}$ is a numerical semigroup if and only if $r_n,r_n+1,\dots,r_n+t-3$ are minimal generators for $R$ where $t<r_{n-1}+r_1-r_n+3$, and $2r_n-r_{i}-r_{j}+t-4\neq r_k$ $\forall i,j,k\in[0,n-1]\cap\N_0$.
	\end{enumerate}
\end{theorem}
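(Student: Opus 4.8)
The plan is to treat the three claims in parallel, since the three triple sums differ only in the width of one ``window'' around $r_n$ and in whether a single extra small element is present. Set $V_1=R\boxplus_{C}\{0,t+1,\rightarrow\}\boxplus_{B}R^{*}$, $V_2=R\boxplus_{B}\{0,t+1,\rightarrow\}\boxplus_{C}R^{*}$ and $V_3=R\boxplus_{C}\{0,t-1,\rightarrow\}\boxplus_{C}R^{*}$; the second form in $(1)$ equals $V_1$ by the identity $\{0,t+1,\rightarrow\}\boxplus_{B}R^{*}=\{0,t,\rightarrow\}\boxplus_{E}R^{*}$ recorded before the corollaries. First I would apply Lemma \ref{l2} and the explicit formulas for $\boxplus_{B}$ and $\boxplus_{C}$ to write down, for each $V_\ell$, its small elements and its gap set as a union of three blocks. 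For $V_1$ the small elements are $0,r_1,\dots,r_{n-1},r_n+t-1$, then $2r_n+t-2-a_j$ for $j=g-1,\dots,1$, then $2r_n+t-1,\rightarrow$, and
$$\G(V_1)=\{a_1,\dots,a_{g-1}\}\cup\{r_n-1,r_n,\dots,r_n+t-2\}\cup\{2r_n+t-2-r_i: i=0,\dots,n-1\}.$$
For $V_2$ the middle small element $r_n+t-1$ is replaced by $r_n-1$ and the middle block of $\G$ by $\{r_n,\dots,r_n+t-1\}$; for $V_3$ there is no middle small element, and the blocks of $\G(V_3)$ are $\{a_1,\dots,a_{g-1}\}$, $\{r_n-1,\dots,r_n+t-3\}$, $\{2r_n+t-4-r_i\}$. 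In particular the \emph{left elements} of each $V_\ell$ split into an ``$R$-part'' $r_1,\dots,r_{n-1}$, at most one ``middle'' element, and an ``$R^{*}$-part'' of the form $2r_n+t-2-a_j$ (or $2r_n+t-4-a_j$). We may assume $n\geq2$ (the ordinary case being trivial), so that $g\geq2$, $r_{n-1}\leq r_n-2$, and $r_1+r_{n-1}\geq r_n=\co(R)$ --- the last because $r_1+r_{n-1}\in R$ and exceeds the largest left element $r_{n-1}$.

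Next I would use that a proper numerical set $T$ is a numerical semigroup iff no element of $\G(T)$ is a sum of two nonzero elements of $T$ (equivalently, Proposition \ref{prp}(2)), and that such a sum forces both summands to be left elements, so only sums of two left elements need be tested. The key point is that all ``cross'' sums are automatically harmless because $R$ is a numerical semigroup: by Proposition \ref{prp}(2), $a-r\in\G(R)$ whenever $a\in\G(R)$, $r\in R$ and $a>r$, and in particular $\F(R)-r_i\in\G(R)$. Using this one checks, block by block, that a sum $r_i+(2r_n+t-2-a_j)$ of an $R$-part with an $R^{*}$-part element equals $\F(V_\ell)-(a_j-r_i)$, which is either $>\F(V_\ell)$ (if $a_j<r_i$) or equals $2r_n+t-2-a_m$ with $a_m=a_j-r_i\in\G(R)$, hence is again a small element of $V_\ell$; that a sum of two $R^{*}$-part elements exceeds $\F(V_\ell)$ since $a_j+a_l\leq 2(r_n-2)$; and that sums involving the middle element $r_n+t-1$ of $V_1$ are $\geq\co(V_1)$ or of the form $\F(V_1)-(\F(R)-r_i)\in V_1$. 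The only new phenomenon is the middle element $r_n-1$ of $V_2$: here $r_i+(r_n-1)=r_n+(r_i-1)$ can land in the window $\{r_n,\dots,r_n+t-1\}$ or equal some $2r_n+t-2-r_k$, and excluding these two possibilities yields exactly the constraints $r_i\notin\{1,\dots,t\}$ (which the remaining hypotheses will force) and $r_n-r_i+t-1\neq r_k$.

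It then remains to test the sums $r_i+r_j$ with $i,j\in[1,n-1]$; these all lie in $R$, so they are dangerous only when they meet $\G(V_\ell)\cap R$, which by the block description is the window $W_\ell$ --- $\{r_n,\dots,r_n+t-2\}$, $\{r_n,\dots,r_n+t-1\}$, $\{r_n,\dots,r_n+t-3\}$ for $\ell=1,2,3$ --- together with $\{2r_n+t-2-r_k\}$ (for $\ell=1,2$) or $\{2r_n+t-4-r_k\}$ (for $\ell=3$); indeed Block $1$ consists of gaps of $R$ and $r_n-1=\F(R)\notin R$, while all other block entries are $\geq\co(R)$. The condition $r_i+r_j\notin\{2r_n+t-2-r_k\}$ is literally ``$2r_n-r_i-r_j+t-2\neq r_k$'' (or with $t-4$). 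Finally I would translate ``$r_i+r_j$ avoids $W_\ell$'' into the minimal-generator form, using the elementary fact that for $0\leq s\leq r_n-2$, an element $r_n+s$ is a minimal generator of $R$ iff $R$ has no left element $\leq s$ and $r_n+s$ is not a sum of two left elements of $R$. For the non-trivial (forward) direction: applying this to $r_1,r_{n-1}$, since $r_1+r_{n-1}\geq r_n$ and $r_1+r_{n-1}\notin W_\ell$ we get $r_1+r_{n-1}\geq r_n+\#W_\ell$, which is precisely the stated inequality ($t<r_{n-1}+r_1-r_n+2$, resp.\ $+1$, $+3$); combining with $r_{n-1}\leq r_n-2$ gives a lower bound on $r_1$ forcing $R$ to have no left element in the range of $s$-values occurring in $W_\ell\cap R$, so that each element of $W_\ell\cap R$ is a minimal generator iff it is not a sum of two left elements. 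The converse is immediate from the definition of a minimal generator, together with the handling of the $r_n-1$ term in $(2)$.

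The main obstacle is twofold. First, getting the three block decompositions exactly right --- especially $\G(V_\ell)\cap R$ and the off-by-one window widths $\#W_\ell$ --- is mechanical but unforgiving. Second, the packaging in the last step: showing that the raw condition ``$r_i+r_j$ misses a window around $r_n$'' is equivalent to the conjunction of the minimal-generator statement and the inequality $t<r_{n-1}+r_1-r_n+c$. The non-obvious input is that the semigroup property of $R$ forces $r_1+r_{n-1}\geq r_n$, which is what makes the window-width inequality automatic and lets the minimal-generator condition absorb the otherwise-hidden lower bound on $r_1$.
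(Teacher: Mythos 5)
Your proposal is correct and, once the dual dictionary is applied, performs the same reduction as the paper, but it travels a genuinely different road: the paper works entirely on the Young-diagram side, drawing the diagram of the triple sum as four regions (the $Y_R$ part, a $(t-1)\times n$ strip with hook lengths $u_{ij}=r_n-r_{j-1}+t-i-1$, an $n\times n$ square with hook lengths $\mu_{ij}=2r_n-r_{i-1}-r_{j-1}+t-2$, and the $Y_{R^{*}}$ part) and invoking Proposition \ref{prp}(2) to demand that every hook length be a gap; you instead never draw a diagram and test additive closure directly, checking that no sum of two left elements is a gap. These are contrapositive formulations of the same condition, and your ``window'' $W_\ell$ and the constraint $r_i+r_j\neq 2r_n+t-2-r_k$ are exactly the paper's $u_{i1}$- and $\mu_{ij}$-conditions. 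What your version buys is that the steps the paper compresses into one sentence (``since $R$ is a numerical semigroup, hook lengths of the yellow and red parts are in the gap set'') become your explicit cross-sum computation $r_i+(2r_n+t-2-a_j)=\F(V_\ell)-(a_j-r_i)$, which genuinely needs $a_j-r_i\in\G(R)$, i.e.\ the semigroup property of $R$; and your observation that $r_1+r_{n-1}\geq r_n$ forces the width inequality is a cleaner derivation than the paper's ``otherwise one of the $u_{i1}$ must be an element''. What the paper's version buys is a picture from which the four cases are read off at once. Two small cautions: your dismissal of the ordinary case $n=1$ as ``trivial'' is the one inaccurate sentence --- there $r_{n-1}=0$ makes the stated inequality read $t<2$, yet the triple sum \emph{is} a semigroup, so the equivalence degenerates (a defect of the theorem's statement that the paper's proof also silently skirts, since its step ``$u_{11}<r_{n-1}+r_1$'' presupposes two nonzero left elements); and in part (2) you should also log the sum $(r_n-1)+(r_n-1)=2r_n-2$ of the middle element with itself, which lands on the small element $2r_n+t-2-a_j$ with $a_j=t$ precisely because $t<r_1$ forces $t\in\G(R)$. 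Neither point undermines the argument for $n\geq 2$.
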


\begin{proof}
	
	We prove the theorem using hook length properties of $Y_R$ and $Y_{R^{*}}$. Note that we will make use of $\G(\{0,t,\rightarrow\})=\{1,\dots,t-1\}$, $R^{*}=\{ 0,a_g-a_{g-1},\dots,a_g-a_1,r_n,\rightarrow \}$ and $\G(R^{*})=\{ a_g-r_{n-1},\dots,a_g-r_1,a_g \}$.
	
	By definition, $R\boxplus_{C}\{0,t,\rightarrow\}=\{ 0,r_1,\dots,r_{n-1},r_n+t-1,\rightarrow\}$ and $\G(R\boxplus_{C}\{0,t,\rightarrow\})=\{a_1,\dots,a_g,r_n,\dots,r_n+t-2\}$. Remember that $Y_R$ has $g$ rows and $n$ columns. Thus, the associated Young diagram $Y_{R\boxplus_{C}\{0,t,\rightarrow\}}$ of $R\boxplus_{C}\{0,t,\rightarrow\}$ is obtained by adding $t-1$ rows of length $n$ on top of $Y_R$. This corresponds to the $(t-1)\times n$ matrix shaped and orange part of Figure \ref{fig}, and the hook lengths of boxes in this part are labeled with $u_{ij}$ where $i\in\{ 1,\dots,t-1 \}$ and $j\in\{ 1,\dots,n \}$. Then by Proposition \ref{prp}, we see that $u_{ij}=r_n-r_{j-1}+t-i-1$ for every $i$ and $j$.
	
	Now, by definition
	\begin{align*}
		R\boxplus_{C}\{0,t,\rightarrow\}\boxplus_{E}R^{*}=&\{ 0,r_1,\dots,r_{n-1},r_n+t-1,r_n+t-1+a_g-a_{g-1},\dots\\
		&\hspace{2cm}\dots,r_n+t-1+a_g-a_1,r_n+t-1+r_n,\rightarrow\}\\
		=&\{ 0,r_1,\dots,r_{n-1},r_n+t-1,2r_n+t-2-a_{g-1},\dots\\
		&\hspace{2cm}\dots,2r_n+t-2-a_1,2r_n+t-1,\rightarrow\}
	\end{align*}
	and
	\begin{align*}
		\G(R\boxplus_{C}\{0,t,\rightarrow\}\boxplus_{E}R^{*})=&\{ a_1,\dots,a_g,r_n,\dots,r_n+t-2,r_n+t-2+a_g-r_{n-1}+1,\dots\\&\hspace{2cm}\dots,r_n+t-2+a_g-r_1+1,r_n+t-2+a_g+1 \}\\
		=&\{ a_1,\dots,a_g,r_n,\dots,r_n+t-2,2r_n-r_{n-1}+t-2+,\dots\\&\hspace{2cm}\dots,2r_n-r_1+t-2,2r_n+t-2 \}
	\end{align*}
	
	Therefore, the associated Young diagram $Y_{R\boxplus_{C}\{0,t,\rightarrow\}\boxplus_{E}R^{*}}$ of $R\boxplus_{C}\{0,t,\rightarrow\}\boxplus_{E}R^{*}$ is obtained by adding $n$ rows of length $n$ on top of $Y_{R\boxplus_{C}\{0,t,\rightarrow\}}$ and then glueing $Y_{R^{*}}$ to the right of it as shown in Figure \ref{fig}. The hook lengths of boxes in the top left $n\times n$ matrix shaped and white part of Figure \ref{fig} are labeled with $\mu_{ij}$. Then by Proposition \ref{prp} again, we see that $\mu_{ij}=2r_n-r_{i-1}-r_{j-1}+t-2$ for every $i,j\in [1,n]\cap\N$.
	
	\begin{figure}
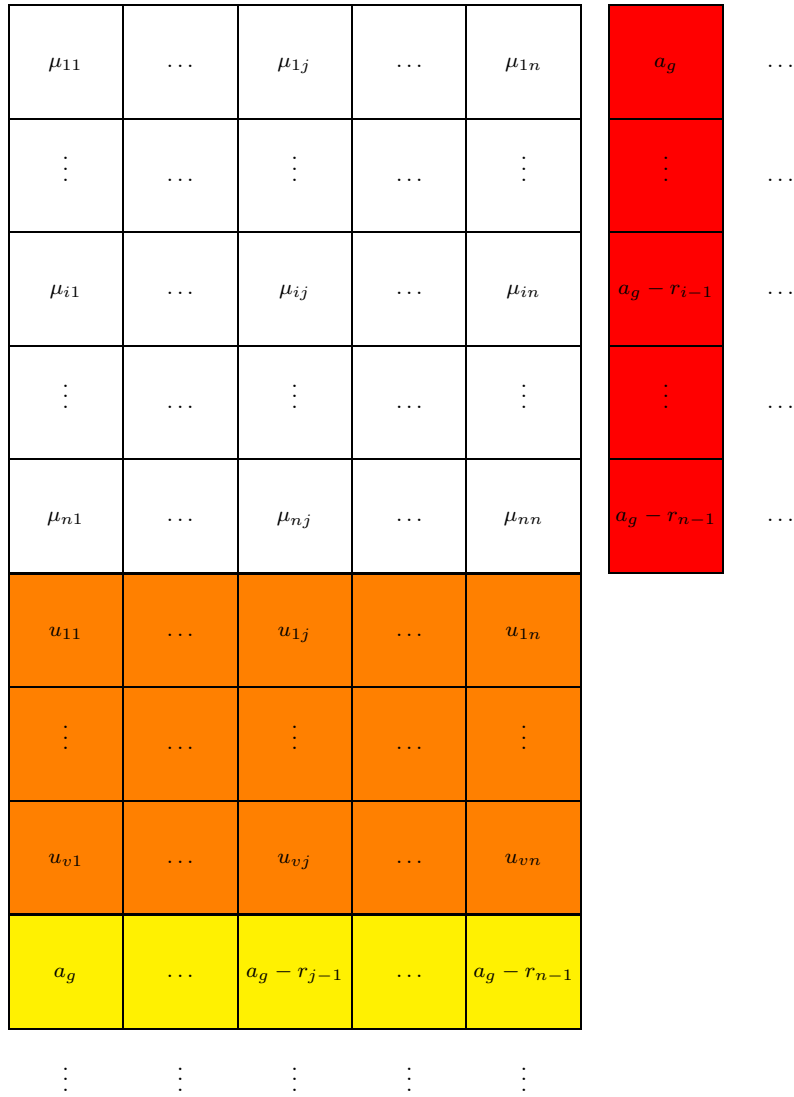

		\begin{center}
			{\tiny\ytableausetup{mathmode, boxsize=5em}
				\begin{tabular}{ccccc}
					\begin{ytableau}
						\mu_{11} & \dots & \mu_{1j} & \dots & \mu_{1n}\\
						\vdots & \dots & \vdots & \dots & \vdots\\
						\mu_{i1} & \dots & \mu_{ij} & \dots & \mu_{in}\\
						\vdots & \dots & \vdots & \dots & \vdots\\
						\mu_{n1} & \dots & \mu_{nj} & \dots & \mu_{nn}\\
					\end{ytableau}&
					\begin{ytableau}[*(red)]
						a_{g}&\none[\dots]\\ \vdots&\none[\dots]\\ a_{g}- r_{i-1}&\none[\dots]\\ \vdots&\none[\dots]\\ a_{g}-r_{n-1}&\none[\dots]
					\end{ytableau}\\
					
					\begin{ytableau}[*(orange)]
						u_{11}  & \dots & u_{1j} & \dots & u_{1n}\\
						\vdots & \dots & \vdots & \dots & \vdots\\
						u_{v1}  & \dots & u_{vj} & \dots & u_{vn}\\
						
					\end{ytableau}\\
					
					\begin{ytableau}[*(yellow)]
						a_{g} & \dots & a_{g}- r_{j-1} & \dots & a_{g}-r_{n-1}\\
						\none[\vdots]& \none[\vdots]& \none[\vdots]&\none[\vdots]&\none[\vdots]
					\end{ytableau}
			\end{tabular}}	
		\end{center}
		\caption{Young diagram of $R\boxplus_{C}\{0,t+1,\rightarrow\}\boxplus_{B}R^{*}=R\boxplus_{C}\{0,t,\rightarrow\}\boxplus_{E}R^{*}$}
		\label{fig}
	\end{figure}
	
	By Proposition \ref{prp}, we also know that $R\boxplus_{C}\{0,t,\rightarrow\}\boxplus_{E}R^{*}$ is a numerical semigroup if and only if hook lengths of all boxes in $Y_{R\boxplus_{C}\{0,t,\rightarrow\}\boxplus_{E}R^{*}}$ are in $\G(R\boxplus_{C}\{0,t,\rightarrow\}\boxplus_{E}R^{*})$. Since $R$ is a numerical semigroup, hook lengths of boxes in the yellow part (associated to $Y_R$) and the red part (associated to $Y_{R^{*}}$) of Figure \ref{fig} are in $\G(R\boxplus_{C}\{0,t,\rightarrow\}\boxplus_{E}R^{*})$. Thus, we only need to check if $u_{ij}$'s and $\mu_{ij}$'s are in $\G(R\boxplus_{C}\{0,t,\rightarrow\}\boxplus_{E}R^{*})$.
	
	However, we have
	\begin{align*}
		u_{ij}\in\G(R\boxplus_{C}\{0,t,\rightarrow\}\boxplus_{E}R^{*})\iff& u_{ij}\in\G(R\boxplus_{C}\{0,t,\rightarrow\})\\ \iff& R\boxplus_{C}\{0,t,\rightarrow\} \text{ is a numerical semigroup}
	\end{align*}
	
	This means that $u_{i1}=r_n+t-i-1$ can not be written as a positive factorization of $\{ 0,r_1,\dots,r_{n-1}\}$, and so $u_{i1}$ is a minimal generator of $R$ for each $i\in[1,t-1]\cap\N$. On the other hand, $u_{11}=r_n+t-2 <r_{n-1}+r_1$ implies that $t<r_{n-1}+r_1-r_n+2$. Otherwise, one of $u_{i1}$ must be an element of $R\boxplus_{C}\{0,t,\rightarrow\}$, which is clearly a contradiction.
	
	Further $\mu_{ij}\in \G(R\boxplus_{C}\{0,t,\rightarrow\}\boxplus_{E}R^{*})$ if and only if $2r_n-r_{i}-r_{j}+t-2\neq r_k$, $2r_n-r_{i}-r_{j}+t-2\neq r_n+t-1$ and $2r_n-r_{i}-r_{j}+t-2\neq2r_n-a_l+t-2$ for every $i,j,k\in[0,n-1]\cap\N_0$ and $l\in[1,g-1]\cap\N$. However, since $R$ is a numerical semigroup, $$2r_n-r_{i}-r_{j}+t-2\neq r_n+t-1 \iff r_n-1\neq r_i+r_j$$ and $$2r_n-r_{i}-r_{j}+t-2\neq2r_n-a_l+t-2 \iff r_i+r_j\neq a_l$$ always hold for every $i,j,k\in[0,n-1]\cap\N_0$ and $l\in[1,g-1]\cap\N$. Hence, $$\mu_{ij}\in \G(R\boxplus_{C}\{0,t,\rightarrow\}\boxplus_{E}R^{*}) \iff 2r_n-r_{i}-r_{j}+t-2\neq r_k$$ for every $i,j,k\in[0,n-1]\cap\N_0$. This proves \textit{(1)}, and others follow in a similar way.
	
\end{proof}

\subsection*{Acknowledgements} 
The author completed most of this work while visiting Manuel Delgado at the Center of Mathematics of the University of Porto (CMUP). The author gratefully acknowledges Manuel Delgado for his generous support, particularly regarding GAP programming tools, and extends sincere thanks to CMUP for its kind hospitality.

\end{document}